\documentclass{ws-aa}

\usepackage{xcolor}
\usepackage[verbose]{hyperref}
\hypersetup{colorlinks=false,allbordercolors=blue,pdfborderstyle={/S/U/W 1}}

\usepackage{amssymb}
\usepackage{amsmath}
\usepackage[T1]{fontenc}
\usepackage{dsfont}
\usepackage{mathrsfs}
\usepackage[a4paper,asymmetric]{geometry}
\usepackage{mathscinet}
\usepackage{fullpage}
\usepackage{latexsym}
\usepackage{graphicx}
\usepackage{epstopdf}
\usepackage{amssymb}
\usepackage{amsfonts}
\usepackage{algorithm}
\usepackage{algorithmicx}
\usepackage{algpseudocode}
\usepackage{graphicx}
\usepackage{subcaption}

\newcommand{\Ll}{\langle}
\newcommand{\Rr}{\rangle}
 \newcommand{\Z}{\mathbb{Z}}

\newcommand{\tl}{\tilde}
\newcommand{\Be}{\begin{equation}}
\newcommand{\Ees}{\end{equation*}}
\newcommand{\Bes}{\begin{equation*}}
\newcommand{\Ee}{\end{equation}}

\newcommand{\R}{\mathbb{R}}
\newcommand{\E}{\mathbb{E}}

\newcommand{\N}{\mathbb{N}}
\newcommand{\mcl}{\mathcal}

\newcommand{\dif}{\mathrm{d}}

\newcommand{\Bey}{\begin{eqnarray}}
\newcommand{\Eey}{\end{eqnarray}}
\newcommand{\Beys}{\begin{eqnarray*}}
\newcommand{\Eeys}{\end{eqnarray*}}

\begin{document}

%
%

\title{Degenerate Stochastic Delay Modified Equation: Approximation of Stochastic Variance Reduced Gradient}

\author{Ting Zhang\footnote{Corresponding author. Email: zhangtstat@nuist.edu.cn.}}

\address{Nanjing University of Information Science and Technology, Nanjing, China.\\
\email{zhangtstat@nuist.edu.cn.}}

\author{Lulu Tian}

\address{Nanjing University of Information Science and Technology, Nanjing, China.\\
\email{202412381812@nuist.edu.cn}}

\author{Yue Ding}

\address{Hongshan College of Nanjing University of Finance and Economics, Nanjing, China.\\
\email{dingyue.math@163.com}}

\maketitle


\begin{abstract}
According to the property of stochastic variance reduced gradient (SVRG) algorithms, we construct a class of degenerate stochastic delay modified equations (SDMEs). Using the Lindeberg principle and the Markov property, we approximate the SVRG algorithms by the corresponding SDMEs. We obtain order 1 weak approximation in the smooth Wasserstein distance and the theory is validated through numerical experiments.
\end{abstract}

\keywords{Stochastic variance reduced gradient algorithms; Degenerate stochastic delay modified equations; Lindeberg principle; Markov property.}

\ccode{Mathematics Subject Classification 2020: 60H07, 60H10, 60H30.}

\section{Introduction}\label{first}

~~~~Stochastic gradient descent (SGD) algorithm is a widely used method for massive optimization issues and has been developed into many other stochastic algorithms to solve various types of optimization problems, see  \cite{CDT20,LTE17,Rotskoff18,zhu19},
but its high variance often leads to a slow convergence. To mitigate this, Johnson et al. \cite{J-Z} developed an explicit variance reduction method and proposed the SVRG algorithm.

{SGD and its variants have been widely used in empirical risk minimization. \cite{CL25} investigates three bootstrap-based variants of SGD, which enhance algorithmic stability, statistical robustness, and generalization performance through resampling and aggregation, while also enabling the construction of distribution-free confidence intervals. \cite{NKM24} studies local SGD for overparameterized linear regression and shows that benign overfitting can be achieved in empirical risk minimization. Additional related work includes \cite{FGK15, WZ22, YL21, ZW22}. However, the dynamical behavior of these methods remains insufficiently characterized from a theoretical perspective.} As a generalized formulation of the modified equation method \cite{WH74}, recent works \cite{CSX20,Fan18,feng18,FGL20,FDD20,hu19,LTE17,L-T-E,Simsekli19,Hu2021} have shown that continuous time Stochastic Modified Equation (SME) serve as a weak approximation for SGD algorithms, which provide a powerful framework for analyzing algorithm dynamics. These continuous time models enable insights into convergence behavior and allow the application of tools such as stochastic control theory to design adaptive algorithms \cite{ZYL20,ZXG20}.

In most of the cited works, weak approximations are characterized in some smooth distance, that is,
\Be  \label{e:AppHError}
\sup_{h \in \mcl H} |\E h(z_{n \eta})-\E h(Y_{n\eta})|,
\Ee
where $\mcl H$ is a family of test functions satisfying some certain conditions for differentiability and growth, $z_{n\eta}$ denotes the $n$-th iteration for the stochastic procedure, $\eta$ represents the algorithm's step size, and $Y_{n \eta}$ represents the continuous-time state at step $n \eta$. \cite{L-T-E} constructed a stochastic differential equation with a step size in the noise term as a SME, then derived the error between the SGD algorithm and the SME in a specific smooth distance, and similar distances have been widely used in many fields, see \cite{A-H19,C-X19,JS23,LTE17} and the references therein.

{Extending this line of research, \cite{CLX22} used the non-degenerate Stochastic Delay Differential Equation (SDDE) to approximate the SVRG-Langevin dynamic (LD) algorithms. The perspective of continuous stochastic dynamics offers a powerful approach to understanding SVRG, particularly through continuous time approximations. The most fundamental distinction lies in the mathematical object being approximated. \cite{CLX22} studies SVRG-Langevin dynamics (SVRG-LD), where the inner-loop noise is explicitly governed by an $\eta$-dependent Langevin diffusion. This results in a non-degenerate continuous time model. In contrast, our work studies the classical SVRG algorithm (corresponding to the $\delta \to 0$ limit of SVRG-LD). In this limit, the inner-loop {\it reference gradient} update introduces a structural degeneracy in the associated Stochastic Delay Modified equation (SDME), as shown in \eqref{e:Tr=L2Norm} below. Therefore, the degeneracy is not a minor variant but a fundamental feature arising from the core variance-reduction mechanism of SVRG when divorced from external Langevin noise. By linking SVRG to the SDME model proposed in this paper, we employ SDME theory (\cite{Sch13}) to demonstrate that the delayed diffusion term introduces a dissipative effect, thereby effectively suppressing fluctuations driven by Brownian motion. This provides a conceptual understanding of SVRG's variance-reducing capability.}

Our proof relies on two main techniques. First, we use the Lindeberg principle to control weak approximation errors, yielding a streamlined approach to establishing the normal CLT, a methodology that has since been successfully extended to a wide array of research areas, see \cite{C-S,C-X19,C-D,KoMo11,MePe16,T-V}.
Second, while SGD's iteration sequence forms a Markov chain, enabling specific approximation methods, SDME and SVRG exhibit non-Markovian behavior (due to system memory and internal variance tracking). This fundamental difference renders tools like the Fokker-Planck equation and SGD approximations unsuitable for the latter algorithms.
To overcome this difficulty, we divide the discrete SVRG iterates and the corresponding continuous time SDME process into local patches. Although neither process is Markovian on the whole, patches are individual Markov chains, collectively forming a function-valued Markov chain. Finally, we validate our results through two numerical experiments on optimization problems. These experiments demonstrate that, across varying step sizes and inner loop lengths, the SDME approximation accurately captures the empirical behavior of SVRG, including the norm of the mean and the scaled covariance norm.

\subsection{Notations}

~~~~We employ the Euclidean norm to introduce Lipschitz functions on $\R^d$. Let $g\in\mathcal{C}^{2}(\mathbb{R}^{d})$, the set of second-order continuously differentiable function, then denote the gradient and the Hessian of $g$ by $\nabla g(y)\in \R^d$ and $\nabla^2 g(y)\in \R^{d \times d}$, respectively.

For any $d\times d$ matrices $A,B$, we denote $\Ll A, B\Rr_{{\rm HS}}:=\sum_{i,j=1}^d A_{ij} B_{ij}$ and define the operator norm of the matrix function $\nabla^{2} g(y)$ by
\Beys
\|\nabla^{2} g(y)\|_{\rm op}&=& \sup_{|u|,|v|=1} |\langle\nabla^{2}g(y),uv^{*}\rangle_{{\rm HS}}|
\Eeys
and
\Beys
\|\nabla^{2} g\|_{{\rm op}, \infty}&=& \sup_{y \in \R^{d}} \|\nabla^{2} g(y)\|_{\rm op},
\Eeys
where $*$ is the transpose operator.
Unless specified otherwise, the operator norm subscript "{\rm op}" is frequently omitted in the preceding definitions,
leaving $\|\nabla^{2} g(y)\|=\|\nabla^{2} g(y)\|_{\rm op}$ and  $\|\nabla^{2} g(y)\|_{\infty}=\|\nabla^{2} g(y)\|_{{\rm op}, \infty}$ for brevity.

For a $d\times d$ matrix $B=(B_{ij})_{1\leq i,j\leq d}$, the operator norma can also be defined as $\|B\|_{\rm op} \ = \ \sup_{|v|=1}|Bv|$ and we denote by $\|B\|_{{\rm HS}} \ = \ \sqrt{\sum_{i,j=1}^{d} B^{2}_{ij}} \ = \ \sqrt{{\rm Tr} (B^{*} B)}$, the Hilbert-Schmidt norm. Then, it is easy to verify that
\Be  \label{e:RelEigHS}
 \|B\|_{\rm op} \ \le \ \|B\|_{{\rm HS}} \ \le \ d^{\frac{1}{2}}\|B\|_{\rm op} .
\Ee

The notation $X\stackrel{\rm d}{=}Y$ indicates that the $d$-dimensional random vectors $X$ and $Y$ possess identical distributions. For any $x\geq0$,
denote $\lfloor x\rfloor$ as the greatest integer not exceeding $x$ and $\lceil x\rceil$ represents the largest integer that is strictly less than $x$. The terms  $C_{\cdot}$ refer to positive constants that are unaffected by $\eta,m$,
and their magnitudes might differ in various contexts.

\section{The Mathematical Framework and Main Results}

\subsection{Stochastic Variance Reduced Gradient Algorithms}\label{second}

~~~~Consider a sequence of functions $f_{1},\cdots,f_{n}: \mathbb{R}^{d} \to \mathbb{R}$. A fundamental challenge in optimization is to approximately solve the problem defined by this sequence as follows:
\begin{align}\label{e:1.1}
{ z^*={\rm argmin}_{z\in\mathbb{R}^{d}}F(z)},\qquad F(z):=\frac{1}{n}\sum_{i=1}^{n}f_{i}(z).
\end{align}
For accelerated discovery of the minimizer $z^*$,
\cite{J-Z} developed the SVRG Algorithm \ref{alg1}.

\begin{algorithm}
\caption{ SVRG Algorithm} \label{alg1}
\begin{algorithmic}[1]
\Require
   Number of iterations $N$, an epoch length $m \in \N$, and an step size $\eta>0$
\Ensure  $\tl z_{0}, \tl z_{1}, \cdots$
\medskip
\State \textbf{Initialize} $\tl z_{0} \in \R^{d}$
\Statex \textbf{Outer Loop:}
\For{$k \gets 0$ \textbf{to} $N$}
  \State $z_{k,0}=\tl  z _{k}$
  \State $\tl \mu_{k}=\frac{1}{n}\sum_{i=1}^{n}\nabla f_{i}(\tilde{ z }_{k})$
  \Statex \textbf{Inner Loop (Nested):}
  \For{$t \gets 1$ \textbf{to} $m$}
    \State $\gamma_{t}{\sim} \text{Uniform}(\{1, \cdots, n\})$
    \State $ z _{k,t}= z _{k,t-1}-\eta\left(\nabla f_{\gamma_{t}}( z _{k,t-1})-\nabla f_{\gamma_{t}}(\tilde{ z }_{k})+\tilde{\mu}_k\right)$
  \EndFor
  \State $\tl  z _{k+1}= z _{k,m}$
\EndFor

\State \Return $\tl  z _{0}, \tl  z _{1},\cdots$
\end{algorithmic}
\end{algorithm}

Regarding the aforementioned SVRG algorithm, by defining $k=ml+t$ and setting $ z _{k}= z _{l,t}$, one can readily verify that the algorithmic iteration is expressible as the subsequent difference equation:
\begin{align}\label{representation}
  z _{0}=&\tilde{ z }_{0}, \nonumber \\
  z _{k}=& z _{k-1}-\eta\Big[\nabla f_{\gamma_{k}}( z _{k-1})-\nabla f_{\gamma_{k}}( z _{\lceil\frac{k}{m}\rceil m})+\nabla F( z _{\lceil\frac{k}{m}\rceil m})\Big],\quad k\geq1.
\end{align}
Then, it is evident that
\begin{eqnarray}  \label{d:TlOmega}
\tilde{ z }_s&=& z _{ms}, \quad \quad \ s=0,1,2,\cdots
\end{eqnarray}
To facilitate comparison with an SDME, we express (\ref{representation}) as
\begin{eqnarray}  \label{e:OmeDef}
 z _{k}&=& z _{k-1}-\eta\nabla F( z _{k-1})+\sqrt{\eta}Q_{\eta}( z _{k-1},  z _{\lceil\frac{k}{m}\rceil m}, \gamma_k),
\end{eqnarray}
where
\begin{eqnarray*}
Q_{\eta}( z _{k-1},  z _{\lceil\frac{k}{m}\rceil m}, \gamma_k)
=-\sqrt{\eta}\left[\nabla f_{\gamma_{k}}( z _{k-1})-\nabla f_{\gamma_{k}}( z _{\lceil\frac{k}{m}\rceil m})-\nabla F( z _{k-1})+\nabla F( z _{\lceil\frac{k}{m}\rceil m})\right].
\end{eqnarray*}
Observe that the variable $\gamma_k$ is independently sampled from the set $\{1,2,\cdots,n\}$. Denote by $\gamma$ a uniformly distributed random variable from $\{1,2,\cdots,n\}$. A straightforward calculation yields
\begin{eqnarray}  \label{e:ConV}
\mathbb{E}\big[Q_{\eta}( z _{k-1},  z _{\lceil\frac{k}{m}\rceil m}, \gamma_k)\big| z _{k-1}, z _{\lceil\frac{k}{m}\rceil m}\big]&=&0,
\end{eqnarray}
\begin{eqnarray}  \label{e:ConCov}
{\rm Cov}\big[Q_{\eta}( z _{k-1},  z _{\lceil\frac{k}{m}\rceil m}, \gamma_k)\big| z _{k-1}, z _{\lceil\frac{k}{m}\rceil m}\big]&=&\eta\sigma( z _{k-1}, z _{\lceil\frac{k}{m}\rceil m}),
\end{eqnarray}
where
\begin{eqnarray*}
\sigma\left( z _{k-1}, z _{\lceil\frac{k}{m}\rceil m}\right)
&=&\E_{\gamma} \Big[\big(\nabla f_{\gamma}( z _{k-1})-\nabla f_{\gamma}( z _{\lceil\frac{k}{m}\rceil m})\big) \big(\nabla f_{\gamma}( z _{k-1})-\nabla f_{\gamma}( z _{\lceil\frac{k}{m}\rceil m})\big)^{*} \Big]\\
& \ \ & -\E_{\gamma} \left[\nabla f_{\gamma}( z _{k-1})-\nabla f_{\gamma}( z _{\lceil\frac{k}{m}\rceil m})\right] \left(\E_{\gamma} \left[\nabla f_{\gamma}( z _{k-1})-\nabla f_{\gamma}( z _{\lceil\frac{k}{m}\rceil m})\right]\right)^{*}.
\end{eqnarray*}
Then, if we assume the function $f_{i}(x)$ is $M$-smooth, that is, for $M>0$, $i=1,\dots,n$,
\begin{align}\label{smooth}
\|\nabla f_{i}(x)-\nabla f_{i}(y)\| \leq M\|x-y\|, \qquad \forall x,y\in\mathbb{R}^{d},
\end{align}
we further get that for any $x,y\in\mathbb{R}^{d}$,
\begin{eqnarray}  \label{e:Tr=L2Norm}
\operatorname{tr}\bigl(\sigma( x, y)\bigr)
&=&\mathbb{E}\bigl[\|\nabla f_{\gamma}(x)-\nabla f_{\gamma}(y)\|^{2}\bigr]-\|\nabla F(x)-\nabla F(y)\|^{2}  \nonumber \\
&\le& \mathbb{E}_{\gamma}\bigl[\|\nabla f_{\gamma}(x)-\nabla f_{\gamma}(y)\|^{2}\bigr]\leq M^{2}\|x-y\|^{2}.
\end{eqnarray}
Therefore, it is straightforward to obtain that $\operatorname{tr}\bigl(\sigma( x, x)\bigr)=0$.

{\cite{CLX22} systematically investigates SVRG-Langevin dynamics (SVRG-LD). In that setting, because Langevin dynamics introduces extra noise (depending on the step size $\eta$) within the inner loop, the corresponding continuous-time stochastic equation possesses a non-degenerate diffusion term. Based on this fact, the work employs Malliavin calculus together with a refined Lindeberg principle to derive an error bound of order $(\eta\delta)^{1/2}\ln(1/\eta)$ in the Wasserstein-1 distance between the SVRG-LD algorithm and its stochastic delay differential equation approximation, where $\delta$ denotes the inverse temperature parameter of the Langevin dynamics.

However, \cite{CLX22} imposes the condition $\delta \ge \eta$, which prevents its results from being directly applied to the classical SVRG algorithm (corresponding to the case $\delta=0$). In particular, when $\delta=0$, \eqref{e:Tr=L2Norm} implies that the associated stochastic delay modified equation has a degenerate diffusion term, making the Malliavin calculus technique inapplicable.

To overcome this limitation, the present paper adopts a Jacobian-flow approach (see Appendix \ref{Aind} below). We first establish regularity estimates for the corresponding semigroup and make more thorough use of the structural properties of the processes when comparing one-step errors (see Lemma \ref{SGDtaylor} below). This allows us to rely only on a second-order Taylor expansion (whereas \cite{CLX22} required a third-order expansion), thereby substantially relaxing the regularity requirements on the semigroup and leading to a sharper error bound.}

\subsection{Degenerate Stochastic Delay Modified Equations}

~~~~As evident from \eqref{representation}, the dependence of $ z _{k}$ on both $ z _{k-1}$ and $ z _{\lceil\frac{k}{m}\rceil m}$ implies $\{ z _{k}\}_{k \ge 0}$ is non-Markovian. By contrast, the aggregated process $\{\overline{ z _{k}}\}_{k \ge 0}$, formed by the $m$-tuples $\overline{ z _{i}}=\{ z _{im}, z _{im+1},\cdots, z _{im+m-1}\}$, $i \ge 0$, possesses the Markov property. The structural similarity between this aggregated process and an SDDE provides the impetus for considering the following SDDE as the target modified equation:
\begin{align}\label{sfde}
\dif Y_{t}=-\nabla F(Y_{t})\dif t+\left(\eta\sigma(Y_{t},Y_{\lceil\frac{t}{m\eta}\rceil m\eta})\right)^{\frac{1}{2}}\dif W_{t}, 
\end{align}
here $W_{t}$ represents a standard $d$-dimensional Brownian motion. Hence, we designate \eqref{sfde} as the critical SDME for our investigation. We express that
\Bey  \label{d:TlXs}
\tl Y_s&=&Y_{sm\eta}, \ \ \ \ \ \ s=0,1,2,\cdots
\Eey

One can obtain from \eqref{smooth} that $\nabla F$ is Lipschitz and if we assume the diffusion coefficient matrix $\Sigma(\cdot,\cdot)$ is Lipschitz,
thus the SDME (\ref{sfde}) has a unique solution (see, e.g., \cite{BYY16,Buc00,M}). In addition, we can concluded from \eqref{e:Tr=L2Norm} that the SDME (\ref{sfde}) is degenerate.

\subsection{Main Results}

~~~~Now, in order to compare two stochastic processes, similar to the weak approximation used in \cite{L-T-E}, we consider the following definition of $k$-weak approximation in smooth Wasserstein distance \cite{A-H19}.


Denote $\mathcal{G}^{k}$ as all real-valued functions $\mathbb{R}^{d}\rightarrow\mathbb{R}$ whose partial derivatives exist and are continuous up to order $k$,
belonging to bounded continuous functions. In addition, we denote by $\mathcal{G}^{k}_{1}$ if the upper bounds of its derivatives are all less than or equal to 1.

\begin{definition}\label{metric}
Let $T>0$, $\eta\in(0,1\wedge T)$, and $k\geq 1$ be an integer. Set $N=\lfloor \frac{T}{\eta}\rfloor$. We say that a continuous-time stochastic process $\{Y_{t}\}_{t\in[0,T]}$ is an order $k$ weak approximation of a discrete stochastic process $\{y_{l}\}_{l=0,\cdots,N}$ in smooth Wasserstein distance if for every $g\in\mathcal{G}_{1}^{k+1}$, there exists a positive constant $C$, which is independent of $\eta$, such that
\begin{align*}
\max_{l=0,\cdots,N}\left|\mathbb{E}g(y_{l})-\mathbb{E}g(Y_{l\eta})\right|\leq C\eta^{k}.
\end{align*}
\end{definition}

{Definition \ref{metric} is equivalent to taking $\mathcal{H} = \mathcal{G}_{1}^{k+1}$ in \eqref{e:AppHError}. Consequently, the order-$k$ weak approximation of a discrete stochastic process $\{y_l\}_{l=0,\dots,N}$ in the smooth Wasserstein distance takes the form
\begin{align*}
\max_{0 \le l \le N} \sup_{g \in \mathcal{G}_{1}^{k+1}}\bigl|\mathbb{E} g(y_l) - \mathbb{E} g(Y_{l\eta})\bigr| \le C\eta^{k}.
\end{align*}
This formulation coincides with the smooth Wasserstein distance of order $k+1$ framework widely used in recent literature on weak approximation of limit theorems (see, e.g., \cite{A-H19}).

The distance in \cite{CLX22} is the standard Wasserstein-1 distance, defined via Lipschitz test functions, that is, taking $\mathcal{H}=\mathcal{G}_{1}^{1}$. In Theorem \ref{main} below, we consider the smooth Wasserstein distance of order $2$, which requires the additional constraint $\|\nabla^{2}g\|_{\infty} \leq 1$. This stronger regularity allows us to connect the error directly to the weak error structure $\mathbb{E}[g(\tilde Y_s)]-\mathbb{E}[g(\tl  z _s)]$ via Taylor expansion and the associated generator, which is central to our proof via the Lindeberg principle and semigroup decomposition. The Wasserstein-1 distance does not inherently provide this differentiable path for analysis. In contrast, the class of test functions considered in \cite{L-T-E} is less restrictive in terms of growth conditions: functions there are only required to be twice continuously differentiable, with their first and second derivatives allowed to have polynomial growth, whereas our function class $\mathcal{G}_{1}^{2}$ demands uniformly bounded derivatives. Consequently, the distances defined in both \cite{CLX22} (Wasserstein-1) and \cite{L-T-E} (twice differentiable functions with polynomial growth) are metrically larger than the smooth Wasserstein distance of order $2$ used in Theorem \ref{main} below. The key contribution of our work, however, lies not in the choice of the distance itself, but in systematically applying this type of smooth distance to the degenerate SDME framework corresponding to classical SVRG, and in developing an analytical approach adapted to this degenerate structure.

}

The following theorem, presenting our central result, characterizes the approximation theorem of the stochastic processes $\tl  z _s$ and $\tilde Y_s$.
\begin{theorem}\label{main}
Let $T>0$, $\eta\in(0,1\wedge \frac{T}{m})$, $m\eta\geq1$ and set $N=\lfloor \frac{T}{m\eta}\rfloor$. For any $x,y\in\mathbb{R}^{d}$, let $\nabla P(\cdot),\sigma(\cdot,y)\in\mathcal{G}^{2}$, $\sigma(x,\cdot)\in\mathcal{G}^{1}$, and $ f_{i}(x)$ is $M$-smooth for $i=1,\cdots,n$. Then, {\color{blue}$\{\tilde Y_s\}_{s\in[0,\frac{T}{m}]}$} is an order 1 weak approximation of the SVRG $\{\tl  z _s\}_{s=0,\cdots,N}$, that, is, for each $g\in\mathcal{G}_{1}^{2}$, there exists a constant $C>0$ independent of $m$ and $\eta$ such that
\begin{align*}
\max_{s=0,\cdots,N}\left|\mathbb{E}g({\color{blue}\tilde Y_s})-\mathbb{E}g(\tl  z _s)\right|\leq C\eta.
\end{align*}
\end{theorem}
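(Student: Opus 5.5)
The proof will be a Lindeberg-type telescoping argument over epochs, using the Markov structure of the epoch-indexed processes. The variable $\tl z_s$ depends only on $\tl z_{s-1}$ and the fresh indices $\gamma_{(s-1)m+1},\dots,\gamma_{sm}$. Likewise $\tl Y_s$ depends only on $\tl Y_{s-1}$ and the Brownian increments on $[(s-1)m\eta,sm\eta]$, since the delayed argument in \eqref{sfde} is frozen at the start of each epoch. Hence both $\{\tl z_s\}$ and $\{\tl Y_s\}$ are time-homogeneous Markov chains on $\R^d$.

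Let $P$ be the one-epoch transition operator of $\tl Y$, $Pg(x)=\E g(Y^x_{m\eta})$, where $Y^x$ solves \eqref{sfde} on one epoch started at $x$ with frozen reference point $x$. Let $\hat P$ be the one-epoch operator of SVRG. Then
\[
\E g(\tl Y_s)-\E g(\tl z_s)=\sum_{j=0}^{s-1}\E\big[(P-\hat P)P^{s-1-j}g(\tl z_j)\big].
\]
It therefore suffices to show $|(P-\hat P)u(x)|\le C\eta\cdot m\eta$ uniformly in $x$, for every $u=P^{r}g$. Since there are $N\le T/(m\eta)$ summands, this gives $C\eta$.

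Within one epoch both processes are Markov in the pair (current state, frozen reference $x$). I will therefore telescope again, step by step. For fixed $x$, let $u_t(y)=\E[u(Y^{x}_{m\eta})\mid Y^x_t=y]$ be the inner semigroup on the remaining time. Compare one SVRG step $y\mapsto y-\eta\nabla F(y)+\sqrt\eta Q_\eta(y,x,\gamma)$ with one $\eta$-time step of \eqref{sfde}, both acting on $u_{t}$.

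I will use a second-order Taylor expansion of $u_t$ around $y$. The SVRG step has drift $-\eta\nabla F(y)$, conditional mean zero noise by \eqref{e:ConV}, and covariance $\eta^2\sigma(y,x)$ by \eqref{e:ConCov}. The SDME step has drift $-\int\nabla F(Y_r)dr$ and noise variance $\eta\int\sigma\,dr$ with the $\eta$ factor, i.e.\ also $\eta^2\sigma(y,x)$ to leading order. So the first- and second-order terms match up to corrections. The remainders are as follows:
\begin{itemize}
\item the drift difference $\E\int_0^\eta(\nabla F(Y_r)-\nabla F(y))dr=O(\eta^2)$, using $\nabla F\in\mathcal G^2$ and Itô's formula;
\item the second-order drift term $\eta^2|\nabla F|^2\|\nabla^2u_t\|$;
\item the Hessian remainder, controlled by $\|\nabla^2 u_t\|_\infty$ times $\eta^2$;
\item the SVRG noise term $\E|\sqrt\eta Q|^2\lesssim\eta^2M^2|y-x|^2$ from \eqref{e:Tr=L2Norm}.
\end{itemize}
Each step thus contributes $C\eta^2(1+|y-x|^2+|\nabla F(y)|^2)$, and summing $m$ steps gives $Cm\eta^2$ times moment factors. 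Those moments are bounded uniformly, since $m\eta\le T$ and the Lipschitz bounds give $\E|z_{k}-x|^2\le C$ via Gronwall. Because $u_t$ has only bounded first and second derivatives and we expand to second order only, this is where the structural matching of the means and covariances is essential.

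The main obstacle is the regularity estimate: $\sup_{t,r}(\|\nabla u_t\|_\infty+\|\nabla^2u_t\|_\infty)\le C$ uniformly in $r$ (the number of outer epochs), $m$ and $\eta$, with no ellipticity available since $\sigma$ is degenerate. I will obtain it by the Jacobian-flow method of Appendix \ref{Aind}. Differentiate $Y^x_t$ with respect to both the initial state and the frozen reference $x$, which both equal the starting point. The first variation $J_t=\nabla_xY_t$ satisfies a linear SDE with coefficients $\nabla^2F$, $\nabla_1\sigma^{1/2}$ and $\nabla_2\sigma^{1/2}$, the noise being multiplied by $\sqrt\eta$. Gronwall with BDG gives $\E\|J_t\|^2\le e^{C t}$, and similarly for the second variation using $\sigma\in\mathcal G^2$ in the first argument and $\mathcal G^1$ in the second.

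One subtlety remains: the matrix square root $\sigma^{1/2}$ may fail to be Lipschitz where $\sigma$ degenerates. I would either assume it is Lipschitz (as the paper does for $\Sigma$) or work through the covariance directly. Composing over epochs, $\|\nabla P^rg\|_\infty\le e^{CT}$ because the total time is at most $T$. This uniform bound then feeds the per-step estimate above and closes the argument.
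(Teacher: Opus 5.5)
Your proposal is essentially the paper's proof. It uses the outer telescoping over epochs against $P^{s-1-j}g=V_g(s-1-j,\cdot)$, the inner step-by-step telescoping of Proposition~\ref{regularity}, and the one-step comparison of Lemma~\ref{SGDtaylor}; there only the first-order terms must match, the noise being centred by \eqref{e:ConV}, while each second-order term is separately $O(\eta^2)$ because each noise has size $O(\eta)$, so no covariance matching is actually needed. It then uses Jacobian-flow regularity as in Appendix~\ref{Aind} and summation with $ms\eta\le T$. You are more careful than the paper in differentiating the epoch map through the frozen reference point (Lemma~\ref{external} silently reuses the fixed-reference bounds \eqref{grad3}--\eqref{gradsec}), but then the second variation needs second and mixed derivatives of the diffusion coefficient in the reference argument, so neither $\mathcal{G}^1$ regularity there nor a merely Lipschitz $\sigma^{1/2}$ suffices; a clean fix is the factorization $n^{-1/2}\bigl(\nabla f_i(y)-\nabla f_i(x)-\nabla F(y)+\nabla F(x)\bigr)_{i=1}^{n}$ driven by an $n$-dimensional Brownian motion, which gives the same law as \eqref{sfde} and has bounded first and second derivatives in both arguments once the $f_i$ have bounded derivatives of orders two and three.
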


{
\begin{remark}
The convergence rate $\eta$ obtained in Theorem \ref{main} holds uniformly over the test function class $\mathcal{G}_{1}^{2}$. This uniformity is achieved through explicit control on the derivative growth of the semigroup associated with the SDME, yielding a bound of the following form:
\begin{align*}
\max_{s=0,\dots,N}\sup_{g\in\mathcal{G}_{1}^{2}}\Bigl|\mathbb{E}g(\tilde{Y}_s)-\mathbb{E}g(\tilde{z}_s)\Bigr|\leq C\eta,
\end{align*}
where the constant $C$ depends solely on model parameters (coefficients, dimension, etc.) and is independent of the individual test function $g$.
\end{remark}
}

\section{The Proof of Theorem \ref{main}}\label{forth}

~~~~From \cite[Lemma 3.1]{CLX22}, one can obtain that both external processes $(\tilde  z _s)_{s \in \Z^{+}}$ and $(\tilde Y_s)_{s \in \Z^{+}}$ are Markov processes.
Consequently, the approach to prove Theorem \ref{main} entails two components.
Subsection \ref{internal sub} employs a Lindeberg principle \cite{CSX20,L-T-E} to bound the approximation error for the internal Markov processes
$\{ z _{k}\}_{ms\leq k\leq m(s+1)}$ and $\{Y_{t}\}_{ms\eta\leq t\leq m(s+1)\eta}$.
Subsection \ref{external sub} employs this Lindeberg principle exclusively for approximating the external Markov process $\{\tilde{ z }_{s}\}_{s\geq0}$ with $\{\tilde{Y}_{s\eta}\}_{s\geq0}$.

\subsection{Internal Markov Model Approximation}\label{internal sub}
\medskip

~~~~While the sequence $\{ z _k\}_{0 \le k \le m}$ generated by an inner loop of SVRG is a Markov process on $\R^d$,
the SDME \eqref{sfde} restricted to $[0, m\eta]$  displays as
\begin{align}\label{sfde-0}
\dif Y_{t}=-\nabla F(X_{t})\dif t+\left(\eta\sigma(Y_{t},Y_0)\right)^{\frac{1}{2}}\dif W_{t} \ \ \ {\rm for} \ t \in [0,m\eta].
\end{align}
Under the condition that the initial value is fixed at $Y_{0}= z _{0}$, the preceding SDME coincides with the SDE:
\begin{align}\label{sfde-0-0}
\dif Y_{t}=-\nabla F(Y_{t})\dif t+\left(\eta\sigma(Y_{t}, z _0)\right)^{\frac{1}{2}}\dif W_{t} \ \ \ {\rm for} \ t \in [0,m\eta],
\end{align}
then the solution to equation \eqref{sfde-0-0} $\{Y_{t}\}_{t\in[0,m\eta]}$ is also a Markov process. Moreover, both $\{ z _k\}_{0 \le k \le m}$ and $\{Y_{t}\}_{t\in[0,m\eta]}$ exhibit time-homogeneous Markov properties. For simplicity, for any $0\leq s\leq t\leq \eta$ and $y\in\mathbb{R}^{d}$, we write $Y_{s,t}^y$ to emphasize the Markov process's dependence on the value $Y_s=y$.
Leveraging the time-homogeneous property, we rewrite the notation $Y_{s,t}^y$ as $Y_{t-s}^y$.
The term $ z ^{y}_{k}$ is denoted accordingly.

Note that the coefficients of SDE (\ref{sfde-0-0}) are twice order differentiable,
the estimates below are stated here and proved in \ref{Aind}.

\begin{lemma}\label{mainlem1}
Let $Y_{t}$ be defined by (\ref{sfde-0-0}) and for any $g\in\mathcal{G}_{1}^{2},$ denote $P_{t}g(y)=\mathbb{E}[g(Y_{t}^{y})]$. Then, for arbitrary $y\in\mathbb{R}^{d}$, whenever $\eta\in(0,1]$ and $t\in[0,m\eta]$, the following holds:
\begin{align}\label{gradient2}
|\nabla P_{t}g(y)|\leq e^{Cm\eta},
\end{align}
\begin{align}\label{sec}
\|\nabla^{2}P_{t}g(y)\|_{{\rm HS}}\leq C_{1}e^{C_{2}m\eta},
\end{align}
where $C,C_{1},C_{2}$ are positive constants, and are independent of $m$ and $\eta$.
\end{lemma}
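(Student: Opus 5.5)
We propose to prove Lemma \ref{mainlem1} by differentiating the flow $y\mapsto Y_t^y$ of \eqref{sfde-0-0} and controlling the first and second Jacobian processes. Write the diffusion coefficient as $\Sigma(y):=(\eta\sigma(y,z_0))^{1/2}$. Under the hypotheses of Theorem \ref{main} ($\nabla F$ and $\sigma(\cdot,z_0)$ of class $\mathcal{G}^2$, together with the Lipschitz assumption on $\Sigma$ stated after \eqref{sfde}), the maps $y\mapsto Y_t^y$ are twice differentiable in mean square. For a direction $v$ we set $J_t v:=\nabla_y Y_t^y\,v$, and for directions $v,w$ we set $K_t(v,w):=\nabla^2_y Y_t^y[v,w]$. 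These satisfy the linear SDEs
\begin{align*}
\dif J_tv&=-\nabla^2F(Y_t)J_tv\,\dif t+\nabla\Sigma(Y_t)[J_tv]\,\dif W_t,\qquad J_0v=v,\\
\dif K_t(v,w)&=\bigl(-\nabla^2F(Y_t)K_t(v,w)-\nabla^3F(Y_t)[J_tv,J_tw]\bigr)\dif t\\
&\quad+\bigl(\nabla\Sigma(Y_t)[K_t(v,w)]+\nabla^2\Sigma(Y_t)[J_tv,J_tw]\bigr)\dif W_t,\qquad K_0=0 .
\end{align*}
Since $g\in\mathcal{G}^2_1$, differentiation under the expectation gives
\begin{align*}
\nabla P_tg(y)\cdot v&=\E[\nabla g(Y_t^y)\cdot J_tv],\\
\nabla^2P_tg(y)[v,w]&=\E\bigl[\nabla^2g(Y^y_t)[J_tv,J_tw]+\nabla g(Y_t^y)\cdot K_t(v,w)\bigr].
\end{align*}
It is therefore enough to show $\E|J_tv|^2\le e^{2Cm\eta}|v|^2$ and $\E|K_t(v,w)|\le C_1e^{C_2m\eta}|v||w|$ for $t\le m\eta$. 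The \textrm{HS} bound then follows from \eqref{e:RelEigHS}, at the cost of a factor $d^{1/2}$ that is absorbed into $C_1$.

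For the first bound we apply It\^o's formula to $|J_tv|^2$:
\[
\dif |J_tv|^2=\bigl(-2\langle J_tv,\nabla^2F\,J_tv\rangle+\|\nabla\Sigma[J_tv]\|_{\rm HS}^2\bigr)\dif t+\text{mart}.
\]
By \eqref{smooth}, $\|\nabla^2F\|\le M$. Since $\Sigma$ is Lipschitz, $\|\nabla\Sigma\|\le L$, where $L$ is independent of $\eta$ once $\eta\le1$ (the factor $\eta^{1/2}$ only helps). Gr\"onwall's inequality then gives $\E|J_tv|^2\le e^{(2M+L^2)t}|v|^2$, and with $t\le m\eta$ this yields \eqref{gradient2} after Cauchy--Schwarz and $|\nabla g|\le1$. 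The same computation applied to $|J_tv|^{4}$ gives $\E|J_tv|^4\le e^{C't}|v|^4$. We will need this fourth moment in the next step.

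For the second derivative we apply It\^o's formula to $|K_t|^2$. The linear terms are handled exactly as before. The inhomogeneous terms are bounded using Young's inequality:
\[
2|\langle K_t,\nabla^3F[J_tv,J_tw]\rangle|+\|\nabla^2\Sigma[J_tv,J_tw]\|^2_{\rm HS}+2\|\nabla\Sigma[K_t]\|\,\|\nabla^2\Sigma[J_tv,J_tw]\|\le C|K_t|^2+C|J_tv|^2|J_tw|^2 .
\]
Here $\|\nabla^3F\|$ and $\|\nabla^2\Sigma\|$ are bounded because $\nabla F,\sigma(\cdot,z_0)\in\mathcal{G}^2$. Taking expectations and bounding $\E|J_tv|^2|J_tw|^2$ by Cauchy--Schwarz and the fourth-moment estimate, Gr\"onwall gives $\E|K_t|^2\le C_1^2e^{2C_2t}|v|^2|w|^2$. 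Combined with $\|\nabla^2g\|\le1$, this yields \eqref{sec}.

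The main obstacle is the regularity of $\Sigma=(\eta\sigma)^{1/2}$. Because the SDME is degenerate, $\sigma(y,z_0)$ vanishes at $y=z_0$ by \eqref{e:Tr=L2Norm}, and a matrix square root is generally not differentiable at degenerate points. So the bounds on $\nabla\Sigma$ and $\nabla^2\Sigma$ cannot be derived from $\sigma\in\mathcal{G}^2$ alone. Our first approach would be to use the standing assumption that $\Sigma$ is Lipschitz, strengthened to $\Sigma(\cdot,z_0)\in\mathcal{G}^2$ with bounded derivatives, and to read the hypothesis in the theorem in that sense. If this is unavailable, we would fall back to working with $\sigma$ directly. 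That route uses the generator identity $\partial_tP_tg=\mathcal{L}P_tg$ with $\mathcal{L}=-\nabla F\cdot\nabla+\frac{\eta}{2}\langle\sigma,\nabla^2\rangle_{\rm HS}$, together with a Bernstein-type maximum principle for $|\nabla P_tg|^2$ and $|\nabla^2P_tg|^2$. It involves only derivatives of $\sigma$, at the price of requiring a bound on $\nabla^2\sigma$ controlled by $\sigma$ (an Oleinik-type inequality). The remaining steps are routine, and $m\eta$ enters only through Gr\"onwall's exponent, which is why all constants are independent of $m$ and $\eta$.
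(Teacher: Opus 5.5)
Your proposal follows the same route as the paper: first and second Jacobian flows, It\^o's formula plus Gr\"onwall for $\E|\nabla_u Y_t^y|^4$ and $\E|\nabla_{u_2}\nabla_{u_1}Y_t^y|^2$, and then the chain rule under the expectation to bound $\nabla P_tg$ and $\nabla^2P_tg$. The square-root regularity issue you flag is real, and the paper does not resolve it. It simply writes the diffusion as $\sqrt{\eta}\,\Sigma$ with $\Sigma=\sigma(\cdot,z_0)\in\mathcal{G}^2$, which implicitly assumes the same bounded-derivative smoothness of the diffusion coefficient that your main route assumes, so your Bernstein-type fallback is not needed to reproduce its argument.
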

Next, we present moment estimates for the SDME and SVRG methods, their proofs appear in \ref{moment estimate}.

\begin{lemma}\label{fourth}
Let $Y_{t}$ be defined by (\ref{sfde-0-0}) and $\eta\in(0,1)$. Then it follows for any $y\in\mathbb{R}^{d}$ and $t\in[0,m\eta]$,
\begin{align}\label{moment}
\mathbb{E}|Y_{t}^{y}|^{2}\leq C_{1}(1+|y|^{2}+\mathbb{E}|z_{0}|^{2})e^{C_{2}m\eta}
\end{align}
and
\begin{eqnarray}\label{moment-1}
\mathbb{E}|Y_{t}^{y}-y|^{2}
&\le&C_{1}(1+|y|^{2}+\mathbb{E}|z_{0}|^{2})e^{C_{2}m\eta}t(t+\eta),
\end{eqnarray}
where $C_{1}$ and $C_{2}$ are some positive constants not depending on $m$ or $\eta$.
\end{lemma}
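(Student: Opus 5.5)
We will prove Lemma \ref{fourth} by Itô's formula applied to $|Y_t^y|^2$ and to $|Y_t^y-y|^2$, combined with Gronwall's inequality. The only structural inputs are the Lipschitz property of $\nabla F$, inherited from \eqref{smooth}, and the trace bound \eqref{e:Tr=L2Norm}, $\operatorname{tr}\sigma(x,z_0)\le M^2|x-z_0|^2$.

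\textbf{Second moment \eqref{moment}.} Itô's formula for \eqref{sfde-0-0} gives
\begin{align*}
\dif |Y_t|^2=\Bigl(-2\langle Y_t,\nabla F(Y_t)\rangle+\eta\operatorname{tr}\sigma(Y_t,z_0)\Bigr)\dif t+2\langle Y_t,(\eta\sigma(Y_t,z_0))^{1/2}\dif W_t\rangle .
\end{align*}
Since $|\nabla F(x)|\le |\nabla F(0)|+M|x|$, we have $-2\langle x,\nabla F(x)\rangle\le C(1+|x|^2)$. By \eqref{e:Tr=L2Norm} and $\eta\le 1$, $\eta\operatorname{tr}\sigma(x,z_0)\le 2M^2(|x|^2+|z_0|^2)$. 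We first localize with stopping times so that the stochastic integral is a true martingale, and then take expectations; here $z_0$ is the (possibly random) initial point, which enters through $\E|z_0|^2$. This yields
\begin{align*}
\E|Y_t^y|^2\le |y|^2+C\int_0^t\bigl(1+\E|Y_r^y|^2+\E|z_0|^2\bigr)\dif r .
\end{align*}
Gronwall's inequality on $[0,m\eta]$ then gives $\E|Y_t^y|^2\le (|y|^2+Ct(1+\E|z_0|^2))e^{Ct}$. Using $t\le m\eta\le e^{m\eta}$ to absorb the factor $t$, we obtain \eqref{moment}. Fatou's lemma removes the localization.

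\textbf{Increment estimate \eqref{moment-1}.} Write
\begin{align*}
Y_t^y-y=-\int_0^t\nabla F(Y_r^y)\dif r+\int_0^t(\eta\sigma(Y_r^y,z_0))^{1/2}\dif W_r .
\end{align*}
For the drift term, Cauchy–Schwarz and \eqref{moment} give
\begin{align*}
\E\Bigl|\int_0^t\nabla F(Y_r^y)\dif r\Bigr|^2\le t\int_0^t C\bigl(1+\E|Y_r^y|^2\bigr)\dif r\le C_1\bigl(1+|y|^2+\E|z_0|^2\bigr)e^{C_2m\eta}\,t^2 .
\end{align*}
For the martingale term, the Itô isometry and \eqref{e:Tr=L2Norm} give
\begin{align*}
\E\Bigl|\int_0^t(\eta\sigma)^{1/2}\dif W_r\Bigr|^2=\eta\int_0^t\E\operatorname{tr}\sigma(Y_r^y,z_0)\dif r\le C\eta\int_0^t\bigl(\E|Y_r^y|^2+\E|z_0|^2\bigr)\dif r ,
\end{align*}
which is at most $C_1(1+|y|^2+\E|z_0|^2)e^{C_2m\eta}\,\eta t$ by \eqref{moment}. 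Adding the two bounds with $(a+b)^2\le 2a^2+2b^2$ gives the factor $t^2+\eta t=t(t+\eta)$, which is \eqref{moment-1}.

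\textbf{Main difficulty.} The only delicate point is bookkeeping: the constants must be independent of $m$ and $\eta$, with all dependence on the horizon placed in the exponent $e^{C_2m\eta}$. The polynomial prefactors $t\le m\eta$ are absorbed into that exponential. The bound \eqref{e:Tr=L2Norm} is what keeps the diffusion contribution of order $\eta t$ despite degeneracy. Strictly it would let us replace $\E|z_0|^2$ by $\E|Y_r-z_0|^2$, but the cruder bound stated here suffices. Existence and integrability of moments follow from the Lipschitz coefficients, which justifies the localization.
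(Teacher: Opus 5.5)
Your proposal is correct and follows essentially the same route as the paper: for \eqref{moment} you apply It\^{o}'s formula to $|Y_t^y|^2$, control the diffusion contribution by the trace bound \eqref{e:Tr=L2Norm} and close with Gronwall; for \eqref{moment-1} you split $Y_t^y-y$ into drift and stochastic integral and bound them by Cauchy--Schwarz and the It\^{o} isometry together with \eqref{moment}, which produces the factor $t(t+\eta)$. Your localization/Fatou step and the explicit absorption of $t\le m\eta\le e^{m\eta}$ into the exponential make rigorous some points that the paper's proof passes over.
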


\begin{lemma}\label{coupling2}
Define $ z _{k}^{y}$ as in (\ref{representation})  and let $\eta\in(0,1)$. Then, for any $0\leq k\leq m$, we have
\begin{align}\label{SGDforth}
\mathbb{E}| z _{k}^{y}|^{2}\leq C_{1}(1+|y|^{2}+\mathbb{E}|z_{0}|^{2})e^{C_{2}m\eta},
\end{align}
where $C_{1},C_{2}$ are certain positive constants that do not depend on $\eta$ or $m$.
\end{lemma}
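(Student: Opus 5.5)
We prove the bound \eqref{SGDforth} by a discrete Gronwall argument on the recursion \eqref{representation} restricted to one inner loop, started from $z_0^y=y$. Within the loop the reference point is fixed: it is the initial point, which is $y$ if we start the loop at $y$, or $z_0$ in the notation of the statement, and we write it as $z_0$. The recursion is then
\[
z_k^y=z_{k-1}^y-\eta\nabla F(z_{k-1}^y)+\sqrt{\eta}\,Q_\eta(z_{k-1}^y,z_0,\gamma_k).
\]
The plan is to expand $|z_k^y|^2$ and take conditional expectations given $\mathcal F_{k-1}=\sigma(z_0,\gamma_1,\dots,\gamma_{k-1})$. This gives
\[
\E\big[|z_k^y|^2\,\big|\,\mathcal F_{k-1}\big]=|z_{k-1}^y-\eta\nabla F(z_{k-1}^y)|^2+\eta\,\E\big[|Q_\eta|^2\,\big|\,\mathcal F_{k-1}\big],
\]
because the cross term vanishes by \eqref{e:ConV}. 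By \eqref{e:ConCov} and \eqref{e:Tr=L2Norm}, the second term equals $\eta^2\operatorname{tr}\sigma(z_{k-1}^y,z_0)\le M^2\eta^2|z_{k-1}^y-z_0|^2$.

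Next I bound each piece using only the $M$-smoothness \eqref{smooth}. Averaging \eqref{smooth} over $i$ shows that $\nabla F$ is $M$-Lipschitz, so $|\nabla F(x)|\le M|x|+|\nabla F(0)|$. Then
\[
|x-\eta\nabla F(x)|^2\le |x|^2+2\eta|x||\nabla F(x)|+\eta^2|\nabla F(x)|^2\le (1+C\eta)|x|^2+C\eta,
\]
using $2|x|\,|\nabla F(0)|\le |x|^2+|\nabla F(0)|^2$ and $\eta<1$. For the noise term, $M^2\eta^2|x-z_0|^2\le 2M^2\eta^2(|x|^2+|z_0|^2)$. Taking full expectations gives the linear recursion
\[
a_k\le (1+C\eta)a_{k-1}+C\eta\big(1+\E|z_0|^2\big),\qquad a_k:=\E|z_k^y|^2,\quad a_0=|y|^2 .
\]
Here $C$ depends only on $M$ and $|\nabla F(0)|$, not on $m$ or $\eta$.

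Iterating the recursion for $k\le m$ gives
\[
a_k\le (1+C\eta)^k|y|^2+(1+\E|z_0|^2)\big((1+C\eta)^k-1\big)\le e^{Cm\eta}\big(1+|y|^2+\E|z_0|^2\big),
\]
using $(1+C\eta)^k\le e^{Ck\eta}\le e^{Cm\eta}$. This is \eqref{SGDforth} with $C_1=1$ and $C_2=C$.

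The only delicate point is the cross term. It vanishes only because $\gamma_k$ is independent of $(z_{k-1}^y,z_0)$, which is what \eqref{e:ConV} encodes. One must also keep the $z_0$-dependence of the noise explicit rather than absorbing it into $|z_{k-1}^y|$, so that the constants stay uniform in $m$ and $\eta$. If the loop is started at $y$ itself, then $z_0=y$ and the $\E|z_0|^2$ term is simply dominated by $|y|^2$.
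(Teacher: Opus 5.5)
Your argument is correct and is essentially the paper's proof: both derive the one-step recursion $a_k\le(1+C\eta)a_{k-1}+C\eta(1+\E|z_0|^2)$ from $M$-smoothness and iterate it using $(1+C\eta)^k\le e^{Cm\eta}$. The only difference is cosmetic. You remove the cross term with the conditional mean-zero property \eqref{e:ConV} and bound the noise by \eqref{e:Tr=L2Norm}, whereas the paper applies Young's inequality directly to $-2\eta\langle z_{k-1},\nabla f_{\gamma_k}(z_{k-1})-\nabla f_{\gamma_k}(z_0)+\nabla F(z_0)\rangle$ and never uses the martingale structure.
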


Additionally, we require the following lemma, with proof appearing in \ref{moment estimate}.

\begin{lemma}\label{SGDtaylor}
For $0\leq t\leq m\eta,$ denote $v_{t}(y)=\E g(Y^y_{t})$, thus for $\eta\in(0,1)$, we have
\begin{align*}
\left|\mathbb{E}v_{t}(Y_{\eta}^{y})-\mathbb{E}v_{t}(z_{1}^{y})\right|
\leq&C_{1}e^{C_{2}m\eta}(1+|y|^{2}+\mathbb{E}|z_{0}|^{2})\eta^{2},
\end{align*}
where $C_{1},C_{2}$ are certain positive constants that do not depend on $m$ or $\eta$.
\end{lemma}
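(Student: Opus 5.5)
Expand $v_t$ to second order around $y$ for both one-step increments, and compare the resulting first- and second-moment terms using the structure of the two processes. Write $\Delta^{Y}=Y^y_\eta-y$ and $\Delta^{z}=z^y_1-y$. By Taylor's theorem,
\begin{align*}
v_t(y+\Delta)=v_t(y)+\langle\nabla v_t(y),\Delta\rangle+\tfrac12\langle\nabla^2 v_t(y),\Delta\Delta^{*}\rangle_{\rm HS}+R(\Delta),
\end{align*}
with a remainder in integral form. We only have second derivatives of $v_t=P_tg$ available from Lemma \ref{mainlem1}, so we cannot go to third order. Instead we use
\begin{align*}
R(\Delta)=\int_0^1(1-\theta)\langle\nabla^2v_t(y+\theta\Delta)-\nabla^2v_t(y),\Delta\Delta^*\rangle_{\rm HS}\,\dif\theta,
\end{align*}
which gives $|R(\Delta)|\le 2\sup\|\nabla^2 v_t\|_{\rm HS}|\Delta|^2$. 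This remainder is useful for the SVRG increment. For $z$ we have $\Delta^z=-\eta\nabla F(y)+\sqrt\eta Q_\eta(y,z_0,\gamma_1)$, and $|\sqrt\eta Q_\eta|\le 2\eta M|y-z_0|$ by $M$-smoothness. Hence $\E|\Delta^z|^2\le C\eta^2(1+|y|^2+\E|z_0|^2)$, and Lemma \ref{mainlem1} turns this into the required $\eta^2$ bound for the $z$-side remainder. The degeneracy of the noise, which scales like $\eta|y-z_0|$, is what makes the increment $O(\eta)$ rather than $O(\sqrt\eta)$.

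The same reasoning handles the SDME side. From Lemma \ref{fourth}, \eqref{moment-1} with $t=\eta$ gives $\E|\Delta^Y|^2\le C(1+|y|^2+\E|z_0|^2)e^{C_2m\eta}\eta^2$. So the full second-order terms $\tfrac12\langle\nabla^2v_t(y),\E\Delta\Delta^*\rangle$ and the remainders on both sides are each $O(\eta^2)$. It therefore remains to compare the first-order terms, and the second-order terms need not be matched at all.

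\textbf{First-order terms.} For SVRG, \eqref{e:ConV} gives $\E\Delta^z=-\eta\nabla F(y)$ exactly. For the SDME, the stochastic integral has zero mean, so
\begin{align*}
\E\Delta^Y=-\int_0^\eta \E\nabla F(Y^y_r)\,\dif r=-\eta\nabla F(y)-\int_0^\eta\E[\nabla F(Y^y_r)-\nabla F(y)]\,\dif r .
\end{align*}
Since $\nabla F$ is $M$-Lipschitz, the last integral is bounded by $M\int_0^\eta(\E|Y_r^y-y|^2)^{1/2}\dif r$. By \eqref{moment-1} this is at most $C\eta\cdot\eta(1+|y|^2+\E|z_0|^2)^{1/2}e^{Cm\eta}$. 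Combining this with $|\nabla v_t(y)|\le e^{Cm\eta}$ from \eqref{gradient2} bounds the first-order discrepancy by $C\eta^2(1+|y|^2+\E|z_0|^2)e^{C m\eta}$.

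\textbf{Main obstacle.} The delicate point is ensuring that \eqref{moment-1} really gives $\E|Y^y_\eta-y|^2=O(\eta^2)$ and not $O(\eta)$; this is where the degenerate diffusion matters. The diffusion contributes $\eta\int_0^\eta\E\,{\rm tr}\,\sigma(Y_r,z_0)\dif r\le \eta^2M^2\sup_r\E|Y_r-z_0|^2$ by \eqref{e:Tr=L2Norm}, and this is $O(\eta^2)$ thanks to the explicit factor $\eta$ in the SDME. I would re-verify this via Itô's isometry together with the moment bound \eqref{moment}. Throughout, the exponential constants are absorbed into $C_1e^{C_2m\eta}$. Because we use only the second-derivative bound \eqref{sec}, no regularity of the semigroup beyond $\mathcal{G}^2$ is needed.
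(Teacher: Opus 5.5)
Your argument is correct and is essentially the paper's. Both are one-step second-order comparisons in which only the drift contributions have to be matched, while every second-order contribution on both sides is $O(\eta^2)$ by Lemma~\ref{mainlem1}, \eqref{moment-1} and \eqref{e:Tr=L2Norm}. The only bookkeeping difference is that you Taylor-expand $v_t(Y^y_\eta)$ about $y$ and compare $\mathbb{E}\Delta^Y$ with $\mathbb{E}\Delta^z$, whereas the paper applies It\^o's formula with the generator $\mathcal{A}$ and compares $\langle\nabla F,\nabla v_t\rangle$ at $y$ and along the path.

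One small correction to your commentary. The $O(\eta)$ size of the increments comes from the $\eta$-scaling, namely the step size in SVRG and the factor $\eta$ in the SDME diffusion (as your last paragraph says). It does not come from the degeneracy, which only supplies the growth bound ${\rm tr}\,\sigma(y,z_0)\le M^2|y-z_0|^2$ used for the constants.
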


\begin{proposition}\label{regularity}
For any $x,y\in\mathbb{R}^{d}$, let $\nabla P(\cdot),\sigma(\cdot,y)\in\mathcal{G}^{2}$ and $\sigma(x,\cdot)\in\mathcal{G}^{1}$. For all $i=1,\cdots,n$, assume the function $f_{i}(x)$ is $M$-smooth. { For any $\eta\in(0,1),$ $0\leq k\leq m$ and $g\in\mathcal{G}_{1}^{2}$}, we have
\begin{align*}
\left|\mathbb{E}g(Y_{k \eta})-\mathbb{E}g(z_k))\right|
 \leq C_{1}e^{C_{2}m\eta}(1+\mathbb{E}|z_{0}|^{2})m\eta^{2},
\end{align*}
for certain positive constants $C_{1}$ and $C_{2}$, which are independent of $m$ and $\eta$.
\end{proposition}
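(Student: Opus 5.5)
The plan is a Lindeberg telescoping argument on the inner loop. Both $\{z_k\}_{0\le k\le m}$ and the solution of \eqref{sfde-0-0} on $[0,m\eta]$ are time-homogeneous Markov processes started at $z_0$. Fix $g\in\mathcal{G}_1^2$ and $0\le k\le m$, and write $v_t(y)=P_tg(y)=\E g(Y^y_t)$. We interpolate between $\E g(Y_{k\eta})=\E v_{k\eta}(z_0)$ and $\E g(z_k)=\E v_0(z_k)$ through the hybrid quantities $\E v_{(k-j)\eta}(z_j)$, $j=0,\dots,k$. This gives the telescoping identity
\begin{align*}
\E g(Y_{k\eta})-\E g(z_k)=\sum_{j=0}^{k-1}\Bigl(\E v_{(k-j)\eta}(z_j)-\E v_{(k-j-1)\eta}(z_{j+1})\Bigr).
\end{align*}

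For each summand, we apply the semigroup property $v_{(k-j)\eta}(y)=\E v_{(k-j-1)\eta}(Y^y_\eta)$. We then condition on $z_j$ and use the Markov property of the SVRG inner loop, under which $z_{j+1}$ given $z_j=y$ has the law of $z_1^y$. The reference point is still $z_0$ for both processes. Each summand therefore equals
\begin{align*}
\E\Bigl[\bigl(\E v_{t}(Y^{y}_\eta)-\E v_{t}(z^{y}_1)\bigr)\big|_{y=z_j}\Bigr],\qquad t=(k-j-1)\eta\in[0,m\eta].
\end{align*}
Lemma \ref{SGDtaylor} bounds the inner difference by $C_1e^{C_2m\eta}(1+|y|^2+\E|z_0|^2)\eta^2$, with constants uniform in $t\in[0,m\eta]$. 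Taking expectation at $y=z_j$ and invoking Lemma \ref{coupling2} gives $\E|z_j|^2\le C_1(1+2\E|z_0|^2)e^{C_2m\eta}$, since $z_j=z_j^{z_0}$ (integrate the bound over $y=z_0$). Each summand is then at most $C_1e^{C_2m\eta}(1+\E|z_0|^2)\eta^2$ after enlarging the constants. Summing $k\le m$ such terms yields the claimed bound $C_1e^{C_2m\eta}(1+\E|z_0|^2)m\eta^2$.

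The main obstacle is the justification behind Lemma \ref{SGDtaylor}, which we are allowed to assume. It expands $v_t$ to second order around $y$, using Lemma \ref{mainlem1} for uniform bounds on $\nabla v_t$ and $\nabla^2 v_t$. The first moments of $Y^y_\eta-y$ and $z^y_1-y$ match up to $O(\eta^2)$ by \eqref{e:ConV}, and their second moments match up to $O(\eta^2)$ by \eqref{e:ConCov} together with \eqref{moment-1}. The remainder is handled by the Lipschitz property of $\nabla^2 v_t$ or by the moment bound $t(t+\eta)$.

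In the present argument, the delicate points are bookkeeping. First, the constants in Lemma \ref{SGDtaylor} must be uniform over $t\in[0,m\eta]$; this is as stated. Second, the conditioning must treat $z_0$ as a fixed parameter in the frozen-coefficient SDE \eqref{sfde-0-0}, with $z_0$ and $y$ kept distinct. That is why the moment term $\E|z_0|^2$ appears separately. It is also why we first condition on $z_0$, run the argument for deterministic $z_0$, and only then integrate.
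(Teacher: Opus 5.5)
Your proposal is correct and follows essentially the same route as the paper. Both use a Lindeberg telescoping over the inner loop through the hybrids $\E v_{(k-j)\eta}(z_j)$, reduce each summand to the one-step comparison $\E v_t(Y^y_\eta)-\E v_t(z^y_1)$ at $y=z_j$, bound it with Lemma \ref{SGDtaylor} and Lemma \ref{coupling2}, and sum over $k\le m$ terms. Your bookkeeping is in fact slightly cleaner than the paper's: you condition on $z_0$ first, keep the moment factor additive so it stays linear in $1+\E|z_0|^2$, and sum all $k$ terms.
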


\begin{proof}
 {When $k=0$, the conclusion is obvious. For $k\geq1$, denote} $Y_{0}=z _{0}$ and $v_{t}(y)=\mathbb{E}[g(Y_{t}^{y})]$ for $g\in\mathcal{G}_{1}^{2}$ and $t\in[0,m\eta]$. Now, we will use the Lindeberg principle and Markov property to prove the proposition.
 For simplicity in subsequent usage, given any $x\in\mathbb{R}^{d}$ and integers $0\leq r\leq t$,
$Y_{\eta t}(\eta r,x)$ denotes the random variable $Y_{\eta t}$ conditioned on $Y_{\eta r}=x$, and $z_{t}(r,z)$ is defined analogously.
 Then we have
\begin{align*}
\mathbb{E}g(Y_{\eta k})=\mathbb{E}g\left(Y_{\eta k}(\eta,Y_{\eta})\right)-\mathbb{E}g\left(Y_{\eta k}(\eta,z_{1})\right)+\mathbb{E}g\left(Y_{\eta k}(\eta,z_{1})\right).
\end{align*}
When $k>1$, since $Y_{\eta t}=Y_{\eta t}(\eta r,Y_{\eta r})$ for any integers $0\leq r\leq t$, we then have
\begin{align*}
\mathbb{E}g\left(Y_{\eta k}(\eta,z_{1})\right)=\mathbb{E}g\left(Y_{\eta k}\left(2\eta,Y_{2\eta}(\eta,z_{1})\right)\right)-\mathbb{E}g\left(Y_{\eta k}(2\eta,z_{2})\right)+\mathbb{E}g\left(Y_{\eta k}(2\eta,z_{2})\right).
\end{align*}
Continuing iteratively by applying the condition $Y_{\eta t}=Y_{\eta t}(\eta r,Y_{\eta r})$ for all integers $r,t>0$ with $t\geq r$, it immediately follows
\begin{align*}
\mathbb{E}g(Y_{\eta k})=\sum_{j=1}^{k}\left[\mathbb{E}g\left(Y_{\eta k}(j\eta,Y_{j\eta}((j-1)\eta,z_{j-1}))\right)-\mathbb{E}g\left(Y_{\eta k}(j\eta,z_{j})\right)\right]+\mathbb{E}g(z_{k}),
\end{align*}
then
\begin{align*}
\mathbb{E}g(Y_{\eta k})-\mathbb{E}g(z_{k})=\sum_{j=1}^{k}\left[\mathbb{E}g\left(Y_{\eta k}(j\eta,Y_{\eta j}((j-1)\eta,z_{j-1}))\right)-\mathbb{E}g\left(Y_{\eta k}(j\eta,z_{j})\right)\right].
\end{align*}
Notice that $Y_{t}$ is a time homogeneous Markov chain, then it follows $v_{\eta(k-j)}(y)=\mathbb{E}\left[g(Y_{\eta k})|Y_{\eta j}=y\right].$ Hence by the fact that $z_{t}=z_{t}(r, z _{r})$ for any integers $r,t>0$ with $t\geq r$, we have
\begin{align}\label{Linde}
\mathbb{E}g(Y_{\eta k})-\mathbb{E}g(z_{k})=&\sum_{j=1}^{k}\left[\mathbb{E}v_{\eta(k-j)}\left(Y_{\eta j}((j-1)\eta,z_{j-1})\right)-\mathbb{E}v_{\eta(k-j)}(z_{j})\right]\nonumber\\
=&\sum_{j=1}^{k}\left[\mathbb{E}v_{\eta(k-j)}\left(Y_{\eta j}((j-1)\eta,z_{j-1})\right)-\mathbb{E}v_{\eta(k-j)}\left(z_{j}(j-1,z_{j-1})\right)\right]\nonumber\\
=&\sum_{j=1}^{k}\left[\mathbb{E}v_{\eta(k-j)}\left(Y_{\eta}^{z_{j-1}}\right)-\mathbb{E}v_{\eta(k-j)}\left(z_{1}^{z_{j-1}}\right)\right],
\end{align}
here the last inequality is obtained by $Y_{\eta}^{z_{j-1}}\stackrel{\rm d}{=}Y_{\eta j}((j-1)\eta,z_{j-1})$ and $z_{1}^{z_{j-1}}\stackrel{\rm d}{=} z_{j}(j-1,z_{j-1})$. Therefore, by Lemmas \ref{SGDtaylor} and \ref{coupling2}, we have
\begin{align*}
\left|\mathbb{E}g(Y_{\eta k})-\mathbb{E}g(z_{k})\right|
\leq&\sum_{j=1}^{k}\left|\mathbb{E}v_{\eta(k-j)}\left(Y_{\eta}^{z_{j-1}}\right)-\mathbb{E}v_{\eta(k-j)}\left(z_{1}^{z_{j-1}}\right)\right|\\
\leq&C_{1}e^{C_{2}m\eta}\eta^{2}\sum_{j=1}^{k-1}(1+\mathbb{E}|z_{j-1}|^{2})(1+\mathbb{E}|z_{0}|^{2})\\
\leq& C_{1}e^{C_{2}m\eta}(1+\mathbb{E}|z_{0}|^{2})k\eta^{2}\leq C_{1}e^{C_{2}m\eta}(1+\mathbb{E}|z_{0}|^{2})m\eta^{2},
\end{align*}
where $C_{1},C_{2}$ are certain positive constants that do not depend on $m$ or $\eta$. The proposition is proved.
\end{proof}

\subsection{External Markov Model Approximation}\label{external sub}

~~~~Recall $\tl Y_s$ in \eqref{d:TlXs}. Let $g \in\mathcal{G}_{1}^{2}$, define
\Bey  \label{e:Uh}
V_g(s,y)&=&\E \left[g(\tl Y^y_s)\right], \ \ \ \ s=0,1,\cdots,N,
\Eey
where the superscript {`$y$' in $\tl Y^y_s$} denotes the initial value.

The following lemma shows that the function $V_g(s,\cdot)\in\mathcal{G}^{2}$ and the proof is postponed to \ref{moment estimate}.

\begin{lemma}\label{external}
Let $g \in \mathcal{G}_{1}^{2}$, $\eta\in(0,1)$ and $m\eta\geq1$. Then, for any $y\in\mathbb{R}^{d}$, $s=0,1,\cdots,N$, it follows
\begin{align}\label{exter2}
\left|\nabla V_g(s,y)\right|\leq e^{Cms\eta},
\end{align}
\begin{align}\label{exter3}
\left\|\nabla^{2}V_g(s,y)\right\|_{\rm HS}\leq C_{1}e^{C_{2}ms\eta},
\end{align}
where $C,C_{1},C_{2}$ are certain positive constants that do not depend on $m$ or $\eta$.
\end{lemma}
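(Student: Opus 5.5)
We prove Lemma \ref{external} by induction on $s$, using the Markov property of the epoch-sampled process $\tl Y_s$ together with the one-epoch semigroup estimates of Lemma \ref{mainlem1}. The starting point is the identity $V_g(s,y)=V_{V_g(s-1,\cdot)}(1,y)$, i.e.
\begin{align*}
V_g(s,y)=\E\big[V_g(s-1,\tl Y^y_1)\big],
\end{align*}
which follows from the Markov property of $(\tl Y_s)$ (\cite[Lemma 3.1]{CLX22}). On the first epoch $[0,m\eta]$ the delay argument is frozen at the initial value $y$, so $\tl Y^y_1=Y^y_{m\eta}$ solves \eqref{sfde-0-0} with $z_0$ replaced by $y$. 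The base case $s=0$ is immediate: $V_g(0,\cdot)=g$, and $|\nabla g|\le1$, $\|\nabla^2 g\|_{\rm HS}\le d^{1/2}$.

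\textbf{Step 1: one-epoch Jacobian flow.} Because the frozen parameter coincides with the starting point, $\nabla_y$ hits both the initial condition and the diffusion's second argument. Let $J_t=\nabla_y Y^y_t$. Then
\begin{align*}
\dif J_t=-\nabla^2F(Y_t)J_t\,\dif t+\sqrt{\eta}\big[\nabla_1\Sigma(Y_t,y)J_t+\nabla_2\Sigma(Y_t,y)\big]\dif W_t,\qquad J_0=I,
\end{align*}
where $\Sigma=\sigma^{1/2}$. Itô's formula, the bounds $\|\nabla^2F\|\le M$ and Lipschitz $\Sigma$ in both arguments (from $\sigma(\cdot,y)\in\mathcal G^2$, $\sigma(x,\cdot)\in\mathcal G^1$), and Gronwall give $\E\|J_t\|^2\le e^{Ct}$ for $t\le m\eta$. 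The inhomogeneous term $\nabla_2\Sigma$ only adds $C\eta t$, which is absorbed because $m\eta\ge1$. We then differentiate once more to get $K_t=\nabla^2_yY^y_t$. Its equation is linear in $K_t$, with forcing terms quadratic in $J_t$ involving $\nabla^3F$ and second derivatives of $\Sigma$; this is why $\nabla F\in\mathcal G^2$ and $\sigma(\cdot,y)\in\mathcal G^2$ are required. This yields $\E\|K_t\|^2\le C_1e^{C_2t}$, provided fourth moments $\E\|J_t\|^4\le e^{Ct}$ are established first, in the same way as the second moments.

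\textbf{Step 2: chain rule and induction.} Write $h=V_g(s-1,\cdot)$. Then
\begin{align*}
\nabla V_g(s,y)=\E\big[J_{m\eta}^{*}\nabla h(\tl Y^y_1)\big],
\end{align*}
so $|\nabla V_g(s,y)|\le \|\nabla h\|_\infty(\E\|J_{m\eta}\|^2)^{1/2}\le e^{Cm(s-1)\eta}e^{Cm\eta}=e^{Cms\eta}$, which is \eqref{exter2}. Similarly,
\begin{align*}
\nabla^2V_g(s,y)=\E\big[J^{*}\nabla^2h\,J+\langle\nabla h,K\rangle\big],
\end{align*}
which gives
\begin{align*}
\|\nabla^2V_g(s,y)\|_{\rm HS}\le \|\nabla^2h\|_\infty e^{Cm\eta}+\|\nabla h\|_\infty C_1e^{C_2m\eta}.
\end{align*}
Inserting the induction hypotheses leads to a recursion of the form $a_s\le e^{Cm\eta}a_{s-1}+C_1e^{C_2m\eta}e^{Cm(s-1)\eta}$.

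\textbf{Main obstacle.} The delicate point is closing this recursion without picking up an extra factor $s$ or $C_1^s$ in the constant. The recursion unrolls to $a_s\le e^{Cms\eta}d^{1/2}+s\,C_1e^{C_2' ms\eta}$. Since $m\eta\ge1$ we have $s\le e^{ms\eta}$, so the factor $s$ is absorbed by enlarging $C_2$; this is where the hypothesis $m\eta\ge1$ enters. For the gradient, the constant must be exactly $1$ in front. This requires the \emph{pathwise} form $\E\|J\|^2\le e^{2Ct}$ obtained from Itô's formula with coefficient $1$ at $t=0$, rather than a Gronwall bound with a prefactor. In that computation one must check carefully that the $\nabla_2\Sigma$ forcing can be absorbed without a multiplicative constant, again using $\eta\le 1\le m\eta$.
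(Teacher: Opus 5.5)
Your skeleton is the paper's: the Markov identity $V_g(s,y)=\E[V_g(s-1,\tl Y^y_1)]$, the chain rule with one-epoch Jacobian bounds, the recursion giving $\|\nabla^2V_g(s,\cdot)\|_{\rm HS}\le C_1se^{C_2ms\eta}$ (the paper's \eqref{exter4}), and absorption of $s$ through $m\eta\ge1$. Where you depart is Step 1, and there you are more careful than the paper. Its proof plugs in \eqref{grad3} and \eqref{gradsec}, which were derived for \eqref{SDE} with the reference point $z_0$ held fixed. For $\tl Y^y_1$, however, the frozen delay argument is the starting point $y$ itself, so the Jacobian picks up the inhomogeneous term $\sqrt{\eta}\,\nabla_2\Sigma(Y_t,y)\,\dif W_t$. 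For the gradient this is harmless, provided $\Sigma$ is Lipschitz in its second argument (see below). For a unit vector $u$ one gets $\frac{\dif}{\dif t}\E|J_tu|^2\le C\E|J_tu|^2+C\eta$, hence $\E|J_tu|^2\le(1+C\eta t)e^{Ct}\le e^{2Ct}$.

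The gap is in the Hessian half of Step 1. Differentiate the $J$-equation again in directions $u_1,u_2$. Besides $\nabla_1\Sigma\,K_t$ and $\nabla_1^2\Sigma[J_tu_1,J_tu_2]$, the martingale part of $\dif K_t[u_1,u_2]$ contains
$\nabla_1\nabla_2\Sigma(Y_t,y)[J_tu_1,u_2]+\nabla_1\nabla_2\Sigma(Y_t,y)[J_tu_2,u_1]+\nabla_2^2\Sigma(Y_t,y)[u_1,u_2]$.

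\begin{itemize}
\item \textbf{Missing regularity.} Mixed and second-argument second derivatives are not supplied by $\sigma(x,\cdot)\in\mathcal G^1$. So attributing the needed regularity to $\sigma(\cdot,y)\in\mathcal G^2$ is wrong. For an abstract coefficient this is not a technicality. Take $d=1$, $F\equiv0$, $\Sigma(x,z)=\phi(z)$ with $\phi\in C^1$ only, and $g=\cos$. Then $V_g(1,y)=\cos(y)\,e^{-m\eta^2\phi(y)^2/2}$, which need not be twice differentiable.
\item \textbf{The choice $\Sigma=\sigma^{1/2}$.} Because $\sigma(x,x)=0$, the symmetric square root is typically not differentiable on the diagonal $x=z$, which is exactly where every epoch starts. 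In $d=1$ with quadratic $f_i$ it is a multiple of $|x-z|$. So neither your Lipschitz claim in the second argument nor the $K$-equation is justified as stated.
\end{itemize}

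Both issues disappear if you use the SVRG structure. Take the $d\times n$ factor $\Sigma(x,z)=n^{-1/2}(G(x)-G(z))$, where $G$ has columns $\nabla f_i-\nabla F$, driven by an $n$-dimensional Brownian motion. It satisfies $\Sigma\Sigma^*=\sigma$, so it gives the same law as \eqref{sfde}. It also satisfies $\nabla_1\nabla_2\Sigma=0$ and $\nabla_2^2\Sigma(x,z)=-n^{-1/2}\nabla^2G(z)$. Your argument then closes under bounded second and third derivatives of the $f_i$. You should state such an assumption explicitly; the paper's own argument only escapes it by overlooking the $y$-dependence of the delay.

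Finally, the \emph{main obstacle} you describe is not one. A per-epoch factor $c\ge1$ compounds to $c^s\le e^{(\ln c)ms\eta}$ because $m\eta\ge1$. This is exactly the device used in the proof of Lemma \ref{exSGD}, so neither $C_1^s$ nor a prefactor in the Jacobian bound needs to be avoided. Also, working with $J_tu$ for unit $u$, as in \eqref{grad3}, rather than with $\E\|J_t\|^2$, sidesteps two problems: $\|J_0\|_{\rm HS}^2=d$, and the operator norm is not directly amenable to It\^o's formula.
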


In addition, the following moment estimate of $\tilde{z}_{s}$ is also necessary and the proof is postponed to \ref{moment estimate}.
\begin{lemma}\label{exSGD}
Let $\tilde{z}_{s}=z_{sm}$, $\eta\in(0,1)$ and $m\eta\geq1$. Then, for any $s\geq0$, it follows
\begin{align}\label{exSGDforth}
\mathbb{E}|\tilde{z}_{s}|^{2}\leq C_{1}e^{C_{2}ms\eta}(1+\mathbb{E}|\tilde{z}_{0}|^{2}),
\end{align}
where $C_{1},C_{2}$ are certain positive constants that do not depend on $m$ or $\eta$.
\end{lemma}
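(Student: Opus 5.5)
The statement to prove is Lemma \ref{exSGD}, the second-moment bound for the outer iterates $\tilde z_s=z_{sm}$. I will obtain it by iterating the inner-loop moment bound of Lemma \ref{coupling2} across epochs, using the Markov property of the outer chain $\{\tilde z_s\}$. The issue to handle is the accumulation of constants. A crude iteration of \eqref{SGDforth} in the form $\E|\tilde z_{s+1}|^2\le C_1(1+2\E|\tilde z_s|^2)e^{C_2m\eta}$ gives a factor $C_1^s$. That factor is only harmless because $m\eta\ge1$: then $C_1^s\le e^{s\log C_1}\le e^{(\log C_1) ms\eta}$, which can be absorbed into $e^{C_2' ms\eta}$.

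For the one-epoch step, I condition on $\tilde z_s=y$. The inner loop of epoch $s$ is then exactly the recursion \eqref{representation} started at $z_0=y$, with reference point $y$ frozen. Because of this, the term $\E|z_0|^2$ in \eqref{SGDforth} becomes $|y|^2$. Rather than quoting \eqref{SGDforth} as a black box, I will redo its short proof in this conditional setting to be safe. From \eqref{e:OmeDef}, $\E[|z_k|^2\mid z_{k-1}]=|z_{k-1}-\eta\nabla F(z_{k-1})|^2+\eta^2\,{\rm tr}\,\sigma(z_{k-1},y)$. The $M$-smoothness assumption \eqref{smooth} gives $|\nabla F(x)|\le |\nabla F(0)|+M|x|$, and \eqref{e:Tr=L2Norm} gives ${\rm tr}\,\sigma(z_{k-1},y)\le M^2|z_{k-1}-y|^2\le 2M^2(|z_{k-1}|^2+|y|^2)$. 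Together these yield
\[
\E|z_k|^2\le (1+C\eta)\E|z_{k-1}|^2+C\eta(1+|y|^2),\qquad \eta\in(0,1).
\]
A discrete Gronwall argument over $k\le m$ then gives $\E[|\tilde z_{s+1}|^2\mid\tilde z_s=y]\le e^{Cm\eta}|y|^2+(e^{Cm\eta}-1)(1+|y|^2)\le 2e^{Cm\eta}(1+|y|^2)$.

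For the iteration over epochs, I take expectations and set $a_s=1+\E|\tilde z_s|^2$. The one-epoch bound becomes $a_{s+1}\le 3e^{Cm\eta}a_s$, and hence $a_s\le 3^s e^{Cms\eta}a_0$. Using $m\eta\ge1$, I bound $3^s\le e^{(\log 3)ms\eta}$, which gives \eqref{exSGDforth} with $C_1=1$ and $C_2=C+\log3$. Both constants depend only on $M$ and $|\nabla F(0)|$, not on $m$ or $\eta$.

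I expect the main obstacle to be bookkeeping rather than depth. The point to get right is that the frozen reference point inside epoch $s$ is $\tilde z_s$ itself, so the inner bound must be applied conditionally with $z_0=y$; otherwise the $\E|z_0|^2$ of Lemma \ref{coupling2} would be misread as the global initial value. The other point is that the per-epoch multiplicative constant must be absorbed into the exponent, and this step uses $m\eta\ge1$.
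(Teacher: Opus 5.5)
Your proposal is correct and follows the same route as the paper. Both arguments apply the one-epoch moment bound of Lemma \ref{coupling2} conditionally on $\tilde z_s$, so that the starting point and the frozen reference point both equal $\tilde z_s$. They then iterate this bound over epochs and absorb the accumulated factor $C^s$ into $e^{C'ms\eta}$ using $m\eta\ge 1$. Your explicit conditional rederivation of the inner-loop recursion just makes rigorous the step the paper covers with the single word ``inductively''.
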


\subsection{Proof of Theorem \ref{main}}

~~~~Combining with Proposition \ref{regularity}, we will use Lindeberg principle to state the main approximation theorem.

\begin{proof}
{ Following the Lindeberg principle, we employ the following iterative procedure. Given any $x\in\mathbb{R}^{d}$ and integers $0\leq r\leq s$, $\tilde{Y}_{s}(r,x)$ denotes the random variable $\tilde{Y}_{s}$ conditioned on $\tilde{Y}_{r}=x$, and $\tilde{z}_{s}(r,x)$ is defined analogously. Then we have
\begin{align*}
\mathbb{E}\left[g(\tilde{Y}_{s})\right]=\mathbb{E}\left[g(\tilde{Y}_{s}(1,\tilde{Y}_{1}))\right]-\mathbb{E}\left[g(\tilde{Y}_{s}(1,\tilde{z}_{1}))\right]
+\mathbb{E}\left[g(\tilde{Y}_{s}(1,\tilde{z}_{1}))\right].
\end{align*}
When $s>1$, since $\tilde{Y}_{s}=\tilde{Y}_{s}(r,\tilde{Y}_{r})$ for any $0\leq r\leq s$, we then have
\begin{align*}
\mathbb{E}\left[g(\tilde{Y}_{s}(1,\tilde{z}_{1}))\right]=
\mathbb{E}\left[g(\tilde{Y}_{s}(2,\tilde{Y}_{2}(1,\tilde{z}_{1})))\right]-\mathbb{E}\left[g(\tilde{Y}_{s}(2,\tilde{z}_{2}))\right]+\mathbb{E}\left[g(\tilde{Y}_{s}(2,\tilde{z}_{2}))\right].
\end{align*}
Continuing iteratively by applying the condition $\tilde{Y}_{s}=\tilde{Y}_{s}(r,\tilde{Y}_{r})$ for all integers $0\leq r\leq s$, it immediately follows
\begin{align*}
\mathbb{E}\left[g(\tilde{Y}_{s})\right]-\mathbb{E}\left[g(\tilde{z}_{s})\right]
=\sum_{i=1}^{s}\left[\mathbb{E}[g(\tilde{Y}_{s}(i,\tilde{Y}_{i}(i-1,\tilde{z}_{i-1})))]-\mathbb{E}[g(\tilde{Y}_{s}(i,\tilde{z}_{i}))]\right].
\end{align*}
Notice that $\tilde{Y}_{s}$ is a time homogeneous Markov Chain, then it follows $V_{g}(s-i,y)=\mathbb{E}\left[g(\tilde{Y}_{s})|\tilde{Y}_{i}=y\right]$. Hence, by the same argument as the proof of (\ref{Linde}), we obtain
\begin{align*}
\mathbb{E}\left[g(\tilde Y_s)\right]-\mathbb{E}\left[g(\tl z_{s})\right]=\sum_{i=1}^{s}\left[\mathbb{E}V_{g}\big(s-i,\tl Y^{\tl z_{i-1}}_{1}\big)-\mathbb{E}V_{g}\big(s-i,\tl z^{\tl z _{i-1}}_{1}\big)\right].
\end{align*}
}
Thus, notice that ${\frac{V_{g}(s,\cdot)}{C_{1}e^{C_{2}ms\eta}}\in\mathcal{G}_{1}^{2}}$, Proposition \ref{regularity} and Lemma \ref{exSGD} imply that
\begin{align*}
|\mathbb{E}g(\tilde Y_s)-\mathbb{E}g(\tl z_{s})|
\le&\sum_{i=1}^{s}\Big|\mathbb{E}V_{g}\big(s-i,\tl Y^{\tl z_{i-1}}_{1}\big)-\mathbb{E}V_{g}\big(s-i,\tl z^{\tl z_{i-1}}_{1}\big)\Big|\\ \le&C_{1}m\eta^{2}\sum_{i=1}^{s}e^{C_{2}m(s-i+1)\eta}\big(1+\mathbb{E}|\tilde{z}_{i-1}|^{2}\big)\\
\leq&C_{1}ms\eta^{2}e^{C_{2}ms\eta}\big(1+\mathbb{E}|\tilde{z}_{0}|^{2}\big)\leq C\eta,
\end{align*}
here the last inequality is obtained by $ms\eta\leq MN\eta\leq T$ and the desired result follows.
\end{proof}

\section{Numerical Experiments}
~~~~In the following, we conduct two numerical simulations to verify the effectiveness and practicality of the proposed model and algorithm.

\subsection{Two-dimensional Quadratic Optimization Problem}

~~~~Consider the following optimization problem with $n = 3, d = 2 $: for any $z=(z_{1},z_{2})\in\mathbb{R}^{2}$,
\begin{align*}
f_1(z) &= z_{(1)}^2, \ \ f_2(z) = z_{(2)}^2,\ \
f_3(z) = \delta \cos\left(z_{(1)}/\epsilon\right) \cos\left(z_{(2)}/\epsilon\right),
\epsilon = 0.1, \ \delta = 0.2,\\
z^* &= \arg \min_{x \in \mathbb{R}^2} F(z),\ \ \
F(z) := \frac{1}{3} \sum_{i=1}^3 f_i(z).
\end{align*}
Note that
\[
  \nabla f_1(z)=\begin{pmatrix}2z_{(1)}\\0\end{pmatrix},
  \quad
  \nabla f_2(z)=\begin{pmatrix}0\\2z_{(2)}\end{pmatrix},
  \quad
  \nabla f_3(z)=\begin{pmatrix}-2\sin(10z_{(1)})\cos(10z_{(2)})\\-2\cos(10z_{(1)})\sin(10z_{(2)})\end{pmatrix}.
\]
In this case,
$ f_1(z), f_2(z), f_3(z), F(z)$ are nonconvex functions.

We first apply the SVRG method as follows:
\begin{align*}
 z_{k}=z_{k-1}-\eta\Big[\nabla f_{\gamma_{k}}(z_{k-1})-\nabla f_{\gamma_{k}}(z_{\lceil\frac{k}{m}\rceil m})+\nabla F(z_{\lceil\frac{k}{m}\rceil m})\Big],\quad k\geq1,
\end{align*}
where $\gamma_k$ is randomly chosen from $\{1,2,3\}$ and independent of each other.

Next, we adopt the SDME method by the Euler-Maruyama scheme as follows:
\begin{align*}
\hat{Y}_{k}=\hat{Y}_{k-1}-\eta\nabla F(\hat{Y}_{k-1})+\eta\sigma(\hat{Y}_{k-1},{{\hat{Y}_{\lceil\frac{k}{m}\rceil m}}})^{\frac{1}{2}}\xi_{k},
\end{align*}
here $\xi_{k}$ denotes a sequence of $d$-dimensional independent and identically distributed random vectors, each following a standard normal distribution.

{Let $\eta=0.01,\ m=2n=6,\ x_{0}=\begin{pmatrix} 1 \\ 1.5 \end{pmatrix}$, where $\eta$ denotes the step size of the SVRG updates, $m$ is the inner loop length, simulation results for SVRG and SDME are shown in Figure~\ref{e1fig1}.
Here we quantify the approximation error between SVRG and SDME on the optimization problem by comparing their first and second order moments.
Specifically, the figure plots $|\mathbb{E}[z_k]|$ and $\bigl|\mathrm{Cov}(z_k)/\eta\bigr|^{0.5}$,
where $|\mathbb{E}[z_k]|$ captures the average convergence trajectory of the parameter vector,
while $\bigl|\mathrm{Cov}(z_k)/\eta\bigr|^{0.5}$ quantifies the magnitude of stochastic fluctuations.
For the SVRG algorithm, empirical means and covariances are computed by averaging over
100 independent runs. For the SDME model, sample trajectories are generated using the
Euler--Maruyama numerical formulas.
As shown in Figure~\ref{e1fig1}, the moments predicted by the SDME closely match the empirical
moments of SVRG, indicating that the continuous time SDME
model can accurately capture the behavior of the discrete SVRG iterations and thus provides an
effective approximation framework for theoretical analysis.}

\begin{figure}
    \centering
        \centering
        \includegraphics[scale=0.5]{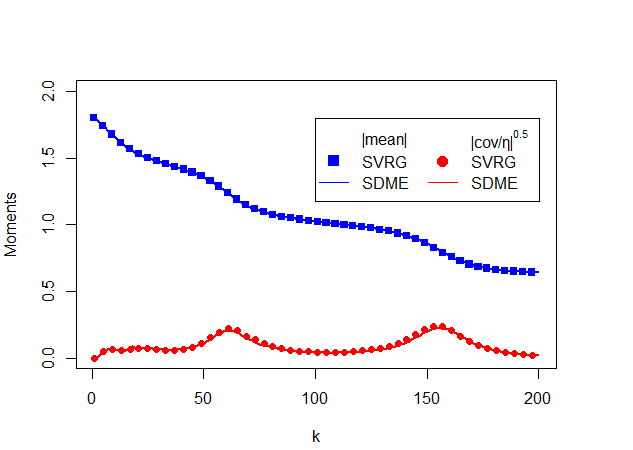}
    \caption{simulation results for SVRG and SDME}  
    \label{e1fig1}
\end{figure}    

\subsection{Binary Logistic Regression Optimization Problem on Real Data}

~~~~In this subsection, we use the Breast Cancer Wisconsin (Diagnostic) data set, see \url{https://archive.ics.uci.edu/dataset/17/breast+cancer+wisconsin+diagnostic}, to validate our theoretical approximation between SVRG and SDME.
This data set contains $n = 569$ samples obtained from digitized Fine Needle Aspirate images of breast masses. Each sample is described by 30 real valued features characterizing the morphology of cell nuclei, and is labeled as either benign or malignant.

We append a constant bias term to each feature vector, resulting in a total dimension of $d = 31$. We then discuss the $\ell_2$ regularized logistic regression with regularization parameter $\lambda = 0.01$, and evaluate both the SVRG algorithm and its SDME approximation under different hyperparameter settings to compare their performance on the following optimization problem:

\begin{align*}
f_i(z) =  \log\bigl(1 + \exp(-y_i\,x_i^\top z)\bigr) + \tfrac{\lambda}{2}\|z\|_2^2,\ \
\min_{z \in \mathbb{R}^{31}} F(z) = \frac{1}{n}\sum_{i=1}^n f_i(z)
\end{align*}
Note that
\begin{align*}
\nabla F(z)
= \frac{1}{n}\sum_{i=1}^n \frac{-y_i\,x_i}{1 + \exp(y_i\,x_i^\top z)} + \lambda\,z,
\end{align*}
where $x_i\in\mathbb{R}^{31}$ (30 standardized features plus a bias term) is the feature vector, $y_i\in\{-1,+1\}$ indicates benign or malignant labels, and $\lambda=0.01$ is the regularization parameter.

{As in Section~4.1, we apply the same SVRG updates and SDME formulas to the
$\ell_2$ regularized logistic regression experiment.
Let $\eta = 0.01$, $m = n = 569$, and $w_0 = 0$ (the zero vector).
Figure~\ref{e2fig1} represents the average trajectory and stochastic variability of the
SVRG algorithm across outer iterations
and shows that the SDME predictions continue to track the empirical SVRG results
closely even in high dimensional settings.}

\begin{figure}
    \centering
        \includegraphics[scale=0.5]{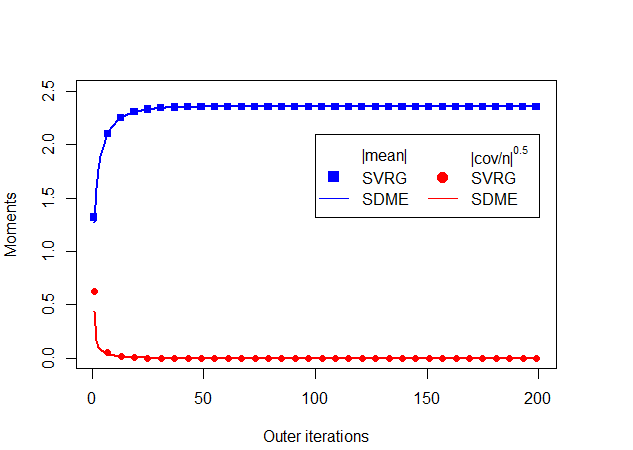}
    \caption{simulation results for SVRG and SDME}  
    \label{e2fig1}
\end{figure}    

\bigskip
\section{Conclusions}\label{conclusion}

~~~~In this article, we present a local block-wise approach to partition the SVRG algorithm and SDME mehtod, where each block constitutes a Markov process and the entire system forms a functional-valued Markov process. Subsequently, by employing the Lindeberg principle and Markov property, we compare the error between SVRG and the corresponding SDME, then obtain order 1 weak approximation in the smooth Wasserstein distance. The theoretical conclusions are further validated through numerical experiments.

Our approximation framework bridges SVRG with SDME theory. Drawing upon SDME analysis,
we demonstrate that including a delay in the diffusion term induces dissipation to mitigate fluctuations induced by Brownian motion.
This result provides an intuitive explanation for SVRG's intrinsic variance reduction mechanism.

Since obtaining exact solutions for degenerate SDME is generally intractable, particularly when
$d\geq 1$, existing studies model $Y_t$ in the form of an asymptotic expansion:
$ Y_t = Y_{0,t} + \sqrt{\eta}\,Y_{1,t} + \cdots,$
here $Y_{j,t}$ is a stochastic process satisfying the initial conditions $Y_{0,0}=y_{0}$,
with $Y_{j,0}=y_{0}$, $j\geq 1$ (see more details in \cite[Sections 3 and 4]{LTE17}).
Substituting this series into the stochastic modified equation (SME), we derive its asymptotic expansion,
which is then employed to approximate the SGD algorithm.
Future work aims to apply this framework to SDME and investigate the approximation of the SVRG algorithm by asymptotically expanded SDME.

\vspace{1cm}

\begin{appendix}

\section{Proof of Lemma \ref{mainlem1}}\label{Aind}

~~~~For brevity, set $b(y):=-\nabla F(y)$, $\Sigma_{z_{0}}(y)=\sigma(y,z_{0}).$ Where unambiguous, abbreviate $\Sigma_{z_{0}}(y)$ as $\Sigma(y).$ Therefore, $b(y), \Sigma(y)\in\mathcal{G}^{2}$ and recall (\ref{sfde-0-0}), the SDE can be reformulated as:
\begin{align}\label{SDE}
\dif Y_{t}=b(Y_{t})\dif t+\sqrt{\eta}\Sigma(Y_{t})\dif W_{t},\quad { Y_{0}}=y.
\end{align}
From \eqref{e:Tr=L2Norm}, it follows directly that equation \eqref{SDE} is degenerate, which renders the Malliavin calculus approach employed in \cite{CLX22} inapplicable. Here we use the Jacobian flow of the process $Y_{t}^{y}$ and the chain rule of the derivative to obtain the regularities of the semigroup $P_{t}g(y)=\mathbb{E}[g(Y_{t}^{y})]$ for any $t\in[0,m\eta]$ and $y\in\mathbb{R}^{d}$.

For a vector $u\in\mathbb{R}^{d},$ the Jacobian flow $\nabla_{u}Y_{t}^{y}$ in the direction of $u$ is given as
\begin{align*}
\nabla_{u}Y_{t}^{y}=\lim_{\epsilon\rightarrow0}\frac{Y_{t}^{y+\epsilon u}-Y_{t}^{y}}{\epsilon},\quad t\geq0.
\end{align*}
Under the above smooth conditions of the coefficients, the above Jacobian flow exists and satisfies the following differential equation:
\begin{align}\label{gradient SDE}
d\nabla_{u}Y_{t}^{y}=\nabla b(Y_{t}^{y})\nabla_{u}Y_{t}^{y}\dif t+\sqrt{\eta}\nabla\Sigma(Y_{t}^{y})\nabla_{u}Y_{t}^{y}\dif W_{t}, \quad \nabla_{u}Y_{0}^{y}=u.
\end{align}

Define $\nabla_{u_{2}}\nabla_{u_{1}}Y_{t}^{y}$ with $u_{1},u_{2}\in\mathbb{R}^{d},$ it follows
\begin{align}\label{secequation}
d\nabla_{u_{2}}\nabla_{u_{1}}Y_{t}^{y}=&\nabla b(Y_{t}^{y})\nabla_{u_{2}}\nabla_{u_{1}}Y_{t}^{y}\dif t+\nabla^{2}b(Y_{t}^{y})\nabla_{u_{2}}Y_{t}^{y}\nabla_{u_{1}}Y_{t}^{y}\dif t\nonumber\\
&+\sqrt{\eta}\nabla\Sigma(Y_{t}^{y})\nabla_{u_{2}}\nabla_{u_{1}}Y_{t}^{y}\dif W_{t}+\sqrt{\eta}\nabla^{2}\Sigma(Y_{t}^{y})\nabla_{u_{2}}Y_{t}^{y}\nabla_{u_{1}}Y_{t}^{y}\dif W_{t},
\end{align}
with $\nabla_{u_{2}}\nabla_{u_{1}}Y_{0}^{y}=0.$

Thus, the following estimate holds immediately.

\bigskip
\begin{lemma}
Let $\eta\in(0,1)$ and $t\in[0,m\eta]$. Then for any $y\in\mathbb{R}^{d}$ and $u,u_{1},u_{2}\in\mathbb{R}^{d}$, it follows that
\begin{align}\label{grad3}
\mathbb{E}|\nabla_{u}Y_{t}^{y}|^{4}\leq |u|^{4}e^{Cm\eta},
\end{align}
\begin{align}\label{gradsec}
\mathbb{E}|\nabla_{u_{2}}\nabla_{u_{1}}Y_{t}^{y}|^{2}
\leq C_{1}e^{C_{2}m\eta}|v_{1}|^{2}|v_{2}|^{2},
\end{align}
here the positive constants $C,C_{1},C_{2}$ do not depend on $m$ or $\eta$.
\end{lemma}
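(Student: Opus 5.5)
We prove both bounds with Itô's formula applied to powers of the norm of the Jacobian flows, followed by Gronwall's inequality on $[0,m\eta]$. Throughout we use that $b=-\nabla F$ has bounded first and second derivatives, since $\nabla F\in\mathcal{G}^{2}$ and $\nabla F$ is $M$-Lipschitz by \eqref{smooth}. We also use that $\Sigma=\sigma(\cdot,z_{0})^{1/2}$ has bounded $\nabla\Sigma$ and $\nabla^{2}\Sigma$, which follows from $\sigma(\cdot,y)\in\mathcal{G}^{2}$ (we read the assumption as applying to the square-root coefficient, as is implicit in \eqref{SDE}). The factor $\sqrt{\eta}$ in front of the noise helps but is not needed: since $\eta\le 1$, all constants can be taken independent of $\eta$. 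The only time dependence then enters through the Gronwall exponent, which gives $e^{Ct}\le e^{Cm\eta}$.

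\emph{Step 1: the bound \eqref{grad3}.} Set $J_t=\nabla_u Y_t^y$, which solves the linear SDE \eqref{gradient SDE}. Apply Itô's formula to $|J_t|^{4}$:
\begin{align*}
\dif |J_t|^{4}=4|J_t|^{2}\langle J_t,\nabla b(Y_t)J_t\rangle\dif t+\eta\Big(2|J_t|^{2}\|\nabla\Sigma(Y_t)J_t\|_{\rm HS}^{2}+4|(\nabla\Sigma(Y_t)J_t)^{*}J_t|^{2}\Big)\dif t+\dif M_t,
\end{align*}
where $M_t$ is a local martingale. Each drift term is bounded by $C|J_t|^{4}$, with $C$ depending only on $\|\nabla b\|_\infty$ and $\|\nabla\Sigma\|_\infty$. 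We localize with stopping times $\tau_R=\inf\{t:|J_t|\ge R\}$ and take expectations, so that the martingale term drops out. Gronwall's inequality then gives $\mathbb{E}|J_{t\wedge\tau_R}|^{4}\le |u|^{4}e^{Ct}$, and Fatou's lemma as $R\to\infty$ yields \eqref{grad3} for $t\le m\eta$.

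\emph{Step 2: the bound \eqref{gradsec}.} Set $K_t=\nabla_{u_2}\nabla_{u_1}Y_t^y$, which solves \eqref{secequation} with $K_0=0$. This is again a linear equation in $K_t$, now with source terms $\nabla^{2}b(Y_t)[J^{(2)}_t,J^{(1)}_t]$ and $\sqrt{\eta}\,\nabla^{2}\Sigma(Y_t)[J^{(2)}_t,J^{(1)}_t]$, where $J^{(i)}_t=\nabla_{u_i}Y_t^y$. Itô's formula for $|K_t|^{2}$, together with Young's inequality $2\langle K,S\rangle\le |K|^{2}+|S|^{2}$ and the elementary bound $|A+B|^{2}\le 2|A|^{2}+2|B|^{2}$ for the diffusion contribution, gives
\begin{align*}
\frac{\dif}{\dif t}\mathbb{E}|K_t|^{2}\le C\,\mathbb{E}|K_t|^{2}+C\,\mathbb{E}\big(|J^{(1)}_t|^{2}|J^{(2)}_t|^{2}\big),
\end{align*}
after the same localization as in Step 1. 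By the Cauchy--Schwarz inequality and \eqref{grad3},
\begin{align*}
\mathbb{E}\big(|J^{(1)}_t|^{2}|J^{(2)}_t|^{2}\big)\le\big(\mathbb{E}|J^{(1)}_t|^{4}\big)^{1/2}\big(\mathbb{E}|J^{(2)}_t|^{4}\big)^{1/2}\le |u_1|^{2}|u_2|^{2}e^{Cm\eta}.
\end{align*}
Gronwall's inequality then gives $\mathbb{E}|K_t|^{2}\le C_1 m\eta\, e^{C_2m\eta}|u_1|^{2}|u_2|^{2}$. Absorbing $m\eta\le e^{m\eta}$ into the exponential yields \eqref{gradsec}, where the vectors written $v_1,v_2$ in the statement are $u_1,u_2$.

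\emph{Main obstacle.} The step needing most care is justifying the differentiability of the flow and the use of Itô's formula. The Jacobian equations have only locally controlled solutions a priori, and $\Sigma=\sigma^{1/2}$ may fail to be smooth where $\sigma$ degenerates. In particular $\operatorname{tr}\sigma(z_0,z_0)=0$ by \eqref{e:Tr=L2Norm}, so $\sigma(\cdot,z_0)$ degenerates at $z_0$, where the square root is generally not differentiable. For this reason the plan does not derive regularity of $\Sigma$ from that of $\sigma$; it takes the required boundedness of $\nabla\Sigma$ and $\nabla^{2}\Sigma$ directly from the standing hypothesis $\sigma(\cdot,y)\in\mathcal{G}^{2}$, interpreted as a hypothesis on the coefficient of \eqref{SDE}. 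With globally Lipschitz, $C^{2}$ coefficients having bounded derivatives, the existence of the first and second derivative flows satisfying \eqref{gradient SDE} and \eqref{secequation} is standard (Kunita's theory). The remainder is the stopping-time localization described above.
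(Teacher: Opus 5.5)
Your proposal is correct and follows the paper's proof essentially step for step. You apply It\^o's formula to $|\nabla_u Y_t^y|^4$ and use Gronwall to get \eqref{grad3}, then apply It\^o's formula to $|\nabla_{u_2}\nabla_{u_1}Y_t^y|^2$ with Young's inequality, Cauchy--Schwarz against \eqref{grad3}, and Gronwall from the zero initial value to get \eqref{gradsec}. Your additions only tighten details the paper passes over: the stopping-time localization, the correctly placed bound $(\mathbb{E}|J^{(1)}_t|^4)^{1/2}(\mathbb{E}|J^{(2)}_t|^4)^{1/2}$ (where the paper's product of fourth powers is a slip), and stating explicitly that the smoothness of $\Sigma=\sigma(\cdot,z_0)^{1/2}$ is assumed rather than derived.
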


\begin{proof}
Recalling (\ref{gradient SDE}), combining with It\^{o}'s formula and (\ref{e:RelEigHS}), one can derive that
\begin{align*}
\frac{\dif}{\dif s}\mathbb{E}|\nabla_{u}Y_{s}^{y}|^{4}=&4\mathbb{E}[|\nabla_{u}Y_{s}^{y}|^{2}\langle\nabla b(Y_{s}^{y})\nabla_{u}Y_{s}^{y},\nabla_{u}Y_{s}^{y}\rangle]+2\eta\mathbb{E}[|\nabla_{u}Y_{s}^{y}|^{2}\|\nabla\Sigma(Y_{s}^{y})\nabla_{u}Y_{s}^{y}\|_{{\rm HS}}^{2}]\\
&+4\eta\mathbb{E}[|\nabla\Sigma(Y_{s}^{y})\nabla_{u}Y_{s}^{y}\nabla_{u}Y_{s}^{y}|^{2}]\\
\leq&C\mathbb{E}|\nabla_{v}Y_{s}^{x}|^{4}.
\end{align*}
The preceding inequality, along with the Gronwall inequality and the initial value $\nabla_{u}Y_{0}^{y}=u$, yields
\begin{align*}
\mathbb{E}|\nabla_{u}Y_{t}^{y}|^{4}\leq |u|^{4}e^{Ct}\leq |u|^{4}e^{Cm\eta}.
\end{align*}

Denote $\phi(t)=\nabla_{u_{2}}\nabla_{u_{1}}Y_{t}^{y}$, according to the equation \eqref{secequation}, by It\^{o}'s formula, the Cauchy-Schwarz inequality and Young's inequality, one can obtain that
\begin{align*}
\frac{\dif}{\dif s}\mathbb{E}|\phi(s)|^{2}=&2\mathbb{E}\big[\langle\nabla b(Y_{s}^{y})\phi(s)+\nabla^{2}b(Y_{s}^{y})\nabla_{u_{2}}Y_{s}^{y}\nabla_{u_{1}}Y_{s}^{y},\phi(s)\rangle\big]\\
&+\eta\mathbb{E}\big[\|\nabla\Sigma(Y_{s}^{y})\phi(s)+\nabla^{2}\Sigma(Y_{s}^{y})\nabla_{u_{2}}Y_{s}^{y}\nabla_{u_{1}}Y_{s}^{y}\|_{{\rm HS}}^{2}\big]\\
\leq&C\left(\mathbb{E}|\phi(s)|^{2}+\mathbb{E}[|\nabla_{u_{2}}Y_{s}^{y}||\nabla_{u_{1}}Y_{s}^{y}||\phi(s)|]+\mathbb{E}\big[|\nabla_{u_{2}}Y_{s}^{y}|^{2}|\nabla_{u_{1}}Y_{s}^{y}|^{2}\big]\right)\\
\leq&C\left(\mathbb{E}|\phi(s)|^{2}+\mathbb{E}[|\nabla_{u_{2}}Y_{s}^{y}|^{2}|\nabla_{u_{1}}Y_{s}^{y}|^{2}]\right),
\end{align*}
then the Cauchy-Schwarz inequality and (\ref{grad3}) further implies that
\begin{align*}
\frac{\dif}{\dif s}\mathbb{E}|\phi(s)|^{2}\leq&C\left(\mathbb{E}|\phi(s)|^{2}+\mathbb{E}[|\nabla_{u_{2}}Y_{s}^{y}|^{4}|\nabla_{u_{1}}Y_{s}^{y}|^{4}]\right)
\leq C_{1}\left(\mathbb{E}|\phi(s)|^{2}+e^{Cm\eta}|u_{1}|^{2}|u_{2}|^{2}\right),
\end{align*}
where the constants $C,C_{1}>0$ are independent of $m$ and $\eta$. This inequality, Gronwall's inequality and the initial value $\phi(0)=0$, together imply
\begin{align*}
\mathbb{E}|\phi(t)|^{2}\leq C_{1}e^{Cm\eta}|u_{1}|^{2}|u_{2}|^{2}\left(t+\int_{0}^{t}se^{C_{1}(t-s)}\dif s\right)
\leq C_{1}e^{C_{2}m\eta}|v_{1}|^{2}|v_{2}|^{2},
\end{align*}
here the positive constants $C_{1},C_{2}$ are independent of $m$ and $\eta$. The proof is complete.
\end{proof}
\bigskip

Now, we are in the position to prove Lemma \ref{mainlem1}.

\begin{proof}[Proof of Lemma \ref{mainlem1}]
Notice that for any $g\in \mathcal{G}^{2}_{1}$, the semigroup $P_{t}g(y)=\mathbb{E}[g(Y_{t}^{y})]$, by Lebesgue's dominated convergence theorem, the chain rule, the Cauchy-Schwarz inequality and (\ref{grad3}), we have
\begin{align*}
|\nabla P_{t}g(y)|\leq\sup_{|u|=1}|\mathbb{E}[\nabla g(Y_{t}^{y})\nabla_{u}Y_{t}^{y}]|
\leq\sup_{|u|=1}\|\nabla g\|\mathbb{E}|\nabla_{u}Y_{t}^{y}|\leq e^{Cm\eta},
\end{align*}
\eqref{gradient2} is proved. Combining with the Cauchy-Schwarz inequality, \eqref{grad3} and \eqref{gradsec}, one can obtain
\begin{align*}
\|\nabla^{2}P_{t}g(y)\|_{{\rm HS}}\leq&\sup_{|u_{1}|,|u_{2}|=1}\left(\left|\mathbb{E}\big[\nabla^{2}g(Y_{t}^{y})\nabla_{u_{2}}Y_{t}^{y}
\nabla_{u_{1}}Y_{t}^{y}\big]\right|+\left|\mathbb{E}\big[\nabla g(Y_{t}^{y})\nabla_{u_{2}}\nabla_{u_{1}}Y_{t}^{y}\big]\right|\right)\\
\leq&\sup_{|u_{1}|,|u_{2}|=1}\left(\mathbb{E}\big[\left|\nabla_{u_{2}}Y_{t}^{y}\right|
\left|\nabla_{u_{1}}Y_{t}^{y}\right|\big]+\mathbb{E}\big|\nabla_{u_{2}}\nabla_{u_{1}}Y_{t}^{y}\big|\right)\\
\leq&C_{1}e^{C_{2}m\eta},
\end{align*}
here the positive constants $C_{1}$ and $C_{2}$ do not depend on $m$ or $\eta$. The second inequality is proved and the proof is complete.
\end{proof}

\section{Proof of Lemmas in Section \ref{forth}}\label{moment estimate}

\begin{proof}[Proof of Lemma \ref{fourth}]
According to the SDE (\ref{sfde-0-0}), invoking the It\^{o} formula, (\ref{e:Tr=L2Norm}) and Young's inequality, one can obtain
\begin{eqnarray*}
\frac{\dif}{\dif s}\E |Y^{y}_{s}|^{2}&=&2 \E \Ll Y_{s}, -\nabla F(Y_{s})\Rr +\eta\E\|\sigma(Y_{s}, z _{0})^{\frac{1}{2}}\|^{2}_{\rm HS}\\
& \le & C\left(\E |Y^{y}_{s}|^{2}+\E|X_s||\nabla F(0)|+\eta|Y_{s}- z _{0}|^{2}\right)\\
& \le &C\left(\E |Y^{y}_{s}|^{2}+|\nabla F(0)|^2+\eta\mathbb{E}| z _{0}|^{2}\right).
\end{eqnarray*}
Noting that $Y^{y}_{0}=y,$ one can write by the Gronwall inequality that
\begin{eqnarray*}
\E |Y^{y}_{t}|^{2}
&\leq&e^{Ct}|y|^{2}+C(|\nabla F(0)|^2+\eta\mathbb{E}|z_{0}|^{2})\int_{0}^{t}e^{C(t-s)}\dif s\\
&\leq&C_{1}(1+|y|^{2}+\mathbb{E}|z_{0}|^{2})e^{C_{2}t}\leq C_{1}(1+|y|^{2}+\mathbb{E}|z_{0}|^{2})e^{C_{2}m\eta},
\end{eqnarray*}
here $C_{1},C_{2}$ both are independent of $m$ and $\eta$. Then \eqref{moment} is proved.

Invoking It\^{o}'s isometry, the Cauchy-Schwarz inequality and (\ref{e:Tr=L2Norm}) yields
\begin{eqnarray*}
\mathbb{E}|Y_{t}^{y}-y|^{2}&\leq&2\mathbb{E}|\int_{0}^{t}-\nabla F(Y_{r}^y)\dif r|^{2}+2\mathbb{E}|\int_{0}^{t}\left(\eta\sigma(Y_{r},z_0)\right)^{\frac{1}{2}}\dif W_{r}|^{2}\nonumber\\
&\leq&2t\int_{0}^{t}\mathbb{E}|\nabla F(Y_{r}^{y})|^{2}\dif r+2\eta\int_{0}^{t}\mathbb{E}\|\sigma(Y_{r},z_0)^{\frac{1}{2}}\|^{2}_{{\rm {HS}}}\dif r\nonumber\\
&\leq&C\left(t\int_{0}^{t}\big(|\nabla F(0)|^{2}+\mathbb{E}|Y_{r}|^{2}\big)\dif r+\eta\int_{0}^{t}\mathbb{E}|Y_{s}-z_{0}|^{2}\dif r\right)\\
&\leq&C\left((t+\eta)\int_{0}^{t}\mathbb{E}|Y_{r}|^{2}\dif r+t\big(t+\eta\mathbb{E}|z_{0}|^{2}\big)\right),
\end{eqnarray*}
combing with \eqref{moment}, \eqref{moment-1} holds immediately.
\end{proof}

\begin{proof}[Proof of Lemma \ref{coupling2}]
\bigskip
Considering (\ref{representation}), notice that for $i=1,\cdots,n$, $ f_{i}(y)$ is $M$-smooth, then it is easy to derive from Young's inequality that
\begin{align*}
\mathbb{E}| z _{k}|^{2}=
&\mathbb{E}| z _{k-1}|^{2}-2\eta\mathbb{E}\left\langle z _{k-1},\nabla f_{\gamma_{k}}( z _{k-1})-\nabla f_{\gamma_{k}}( z _{0})+\nabla F( z _{0})\right\rangle\\
&+\eta^{2}\mathbb{E}\left|\nabla f_{\gamma_{k}}( z _{k-1})-\nabla f_{\gamma_{k}}( z _{0})+\nabla F( z _{0})\right|^{2}\\
\leq&(1+\eta)\mathbb{E}| z _{k-1}|^{2}+\eta(1+\eta)\mathbb{E}\left|\nabla f_{\gamma_{k}}( z _{k-1})-\nabla f_{\gamma_{k}}( z _{0})+\nabla F( z _{0})\right|^{2}\\
\leq&(1+C\eta)\mathbb{E}| z _{k-1}|^{2}+C\eta(1+\mathbb{E}| z _{0}|^{2}),
\end{align*}
here $C>0$, and is independent of $m$ and $\eta$. Therefore, by the inequality $1+r\leq e^{r}$ for any $r>0$, we have
\begin{align*}
\mathbb{E}| z _{k}^{y}|^{2}
\le & (1+C\eta)^{k}|y|^{2}+C(1+\mathbb{E}|z_{0}|^{2})\eta\sum_{j=0}^{k-1}
(1+C\eta)^{j} \\
\leq&C_{1}(1+|y|^{2}+\mathbb{E}|z_{0}|^{2})(1+C\eta)^{k}\\
\leq&C_{1}(1+|y|^{2}+\mathbb{E}|z_{0}|^{2}))e^{C_{2}\eta k}\\
\leq&C_{1}(1+|y|^{2}+\mathbb{E}|z_{0}|^{2}))e^{C_{2}m\eta},
\end{align*}
here positive constants $C_{1},C_{2}$ are independent of $m$ and $\eta$. The proof is complete.
\end{proof}

\begin{proof}[Proof of Lemma \ref{SGDtaylor}]
For $Y_{\eta}^{y}$, one can write by It$\hat{o}$'s formula that
\begin{align*}
\mathbb{E}v_{t}(Y_{\eta}^{y})-v_{t}(y)=\int_{0}^{\eta}\mathbb{E}\left[\mathcal{A}v_{t}(Y_{s}^{y})\right]\dif s,
\end{align*}
where $\mathcal{A}$ is the infinitesimal generator of the Markov process $Y_{t}$, which is defined as follows:
\begin{align*}
\mathcal{A}g(y)
=&\frac{1}2\eta\Ll \sigma(y,z_0),\nabla^{2} g(y)\Rr_{{\rm HS}}-\Ll \nabla F(y),\nabla g(y)\Rr, \quad g\in\mathcal{C}^{2}(\mathbb{R}^{d}).
\end{align*}
For $z_{1}^{y}$, notice that $z_{1}^{y}=y-\eta\nabla F(y)+\sqrt{\eta}Q_{\eta}(y,z_{0},\gamma)$, by the second order Taylor expansion, we have
\begin{align*}
v_{t}(z_{1}^{y})-v_{t}(y)
=&\langle-\eta\nabla F(x)+\sqrt{\eta}Q_{\eta}(y,z_{0},\gamma),\nabla v_{t}(y)\rangle\\
&+\int_{0}^{1}\int_{0}^{r}\left\langle\left(-\eta\nabla F(y)+\sqrt{\eta}Q_{\eta}(y,z_{0},\gamma)\right)\left(-\eta\nabla F(x)+\sqrt{\eta}Q_{\eta}(y,z_{0},\gamma)\right)^{*},\right.\\
&\left.\qquad\qquad\qquad\nabla^{2}v_{t}\left(y+s(-\eta \nabla f_\gamma(y)+\eta \nabla f_\gamma(z_0)-\eta\nabla F(z_0))\right)\right\rangle_{{\rm HS}}\dif s\dif r,
\end{align*}
then \eqref{e:ConV} further implies
\begin{align*}
\mathbb{E}v_{t}(z_{1}^{y})-v_{t}(y)
=&\langle-\eta\nabla F(y),\nabla v_{t}(y)\rangle\\
&+\int_{0}^{1}\int_{0}^{r}\mathbb{E}\left\langle\left(-\eta\nabla F(y)+\sqrt{\eta}Q_{\eta}(y,z_{0},\gamma)\right)\left(-\eta\nabla F(y)+\sqrt{\eta}Q_{\eta}(y,z_{0},\gamma)\right)^{*},\right.\\
&\left.\qquad\qquad\qquad\nabla^{2}v_{t}\left(y+s(-\eta \nabla f_\gamma(y)+\eta \nabla f_\gamma(z_0)-\eta\nabla F(z_0))\right)\right\rangle_{{\rm HS}}\dif s\dif r.
\end{align*}
Hence, we have
\begin{align*}
\mathbb{E}v_{t}(Y_{\eta}^{y})-\mathbb{E}v_{t}(z_{1}^{y})=\int_{0}^{\eta}\mathbb{E}\left[\left\langle\nabla F(y),\nabla v_{t}(y)\right\rangle-\left\langle\nabla F(Y_{s}^{y}),\nabla v_{t}(Y_{s}^{y})\right\rangle\right]\dif s+\mathcal{R}_{\eta}(y,z_{0})
\end{align*}
with
\begin{align*}
\mathcal{R}_{\eta}(y,z_{0})=&\frac{1}2\eta\int_{0}^{\eta}\mathbb{E}\Ll \sigma(Y_{s}^{y},z_0), \nabla^{2} v_{t}(Y_{s}^{y})\Rr_{{\rm HS}}\dif s\\
&+\int_{0}^{1}\int_{0}^{r}\mathbb{E}\left\langle\left(-\eta\nabla F(y)+\sqrt{\eta}Q_{\eta}(y,z_{0},\gamma)\right)\left(-\eta\nabla F(y)+\sqrt{\eta}Q_{\eta}(y,z_{0},\gamma)\right)^{*},\right.\\
&\left.\qquad\qquad\qquad\nabla^{2}v_{t}\left(x+s(-\eta \nabla f_\gamma(y)+\eta \nabla f_\gamma(z_0)-\eta\nabla F(z_0))\right)\right\rangle_{{\rm HS}}\dif s\dif r.
\end{align*}
On the one hand, notice that $\nabla F\in\mathcal{G}^{2}$, one can write by Lemma \ref{mainlem1}, the Cauchy-Schwarz inequality and \eqref{moment-1} that
\begin{align*}
&\left|\int_{0}^{\eta}\mathbb{E}\left[\left\langle\nabla F(y),\nabla v_{t}(y)\right\rangle-\left\langle\nabla F(Y_{s}),\nabla v_{t}(Y_{s}^{y})\right\rangle\right]\dif s\right|\\
\leq&\int_{0}^{\eta}\mathbb{E}\left[\left|\left\langle\nabla F(y),\nabla v_{t}(y)-\nabla v_{t}(Y_{s}^{y})\right\rangle\right|+\left|\left\langle\nabla F(y)-\nabla F(Y_{s}^{y}),\nabla v_{t}(Y_{s}^{y})\right\rangle\right|\right]\dif s\\
\leq& C(\|\nabla v_{t}\|+\|\nabla^{2}v_{t}\|_{{\rm HS}})(1+|y|)\int_{0}^{\eta}\mathbb{E}|Y_{s}^{y}-y|\dif s\\
\leq& C_{1}e^{C_{2}m\eta}(1+|y|^{2}+\mathbb{E}|z_{0}|^{2})\int_{0}^{\eta}\sqrt{s(s+\eta)}\dif s\leq C_{1}e^{C_{2}m\eta}(1+|y|^{2}+\mathbb{E}|z_{0}|^{2})\eta^{2},
\end{align*}
for some positive constants $C_{1},C_{2}$ that are independent of $m$ and $\eta$. On the other hand, since $\nabla P(\cdot),\sigma(\cdot,y)\in\mathcal{G}^{2}$, by \eqref{e:Tr=L2Norm} and Lemma \ref{mainlem1}, we obtain
\begin{align*}
\left|\mathcal{R}_{\eta}(y,z_{0})\right|
\leq& C\eta\|\nabla^{2}v_{t}\|\left(\int_{0}^{\eta}(1+\mathbb{E}|Y_{s}^{y}|^{2}+\mathbb{E}|z_{0}|^{2})\dif s+\eta\int_{0}^{1}\int_{0}^{r}(1+\mathbb{E}|Y_{s}^{y}|^{2}+\mathbb{E}|z_{0}|^{2})\dif s\dif r\right)\\
\leq&C_{1}e^{C_{2}m\eta}(1+|y|^{2}+\mathbb{E}|z_{0}|^{2})\eta^{2},
\end{align*}
for certain positive constants $C_{1},C_{2}$ that are independent of $m$ and $\eta$, the desired result follows.
\end{proof}

\begin{proof}[Proof of Lemma \ref{external}]

When $s=0$, we have $V_g(0,y)=\E \left[g(\tl Y^y_0)\right]=g(y)$, notice that $g\in\mathcal{G}_{1}^{2}$, \eqref{exter2} holds obviously. When $s=1$, \eqref{exter2} follows from \eqref{gradient2}.

Now, assume \eqref{exter2} holds for $s$, that is,
\begin{align*}
\left|\nabla V_g(s,y)\right|\leq e^{Cms\eta}
\end{align*}
for some positive constant $C$, which does not depend on $m$ or $\eta$. By the Markov property of $\tl Y^y_{s}$ and the chain rule, we have
\begin{align}\label{form1}
\nabla V_g(s+1,y)=\E \left[\nabla g(\tl Y^y_{s+1})\right]=\mathbb{E}\left[\nabla g(\tl Y^{\tl Y_{1}^{y}}_{s+1})\right]=\mathbb{E}\left[\nabla_{(\tl Y_{1}^{y})} g(\tl Y^{\tl Y_{1}^{y}}_{s+1})\nabla_{(y)}\tl Y_{1}^{y}\right],
\end{align}
where $\nabla_{(y)}$ means the gradient operator acts on $y$. These implies that
\begin{align*}
\left|\nabla V_g(s+1,y)\right|\leq&\mathbb{E}\left|\nabla_{(\tl Y_{1}^{y})} g(\tl Y^{\tl Y_{1}^{y}}_{s+1})\nabla_{(y)}\tl Y_{1}^{y}\right|\\
\leq&\mathbb{E}\left[\mathbb{E}\left[\left|\nabla_{(\tl Y_{1}^{y})} g(\tl Y^{\tl Y_{1}^{y}}_{s+1})\right|\big|\tl Y_{1}^{y}\right]\left|\nabla_{(y)}\tl Y_{1}^{y}\right|\right]\\
\leq&e^{Cms\eta}\mathbb{E}\left|\nabla_{(y)}\tl Y_{1}^{y}\right|\\
\leq&e^{Cms\eta}\sup_{|u|=1}\mathbb{E}\left|\nabla_{u}\tl Y_{1}^{y}\right|\leq e^{Cm(s+1)\eta},
\end{align*}
here the last inequality is due to \eqref{grad3} and the Cauchy-Schwarz inequality. The inequality \eqref{exter2} is derived.

For the inequality \eqref{exter3}, when $s=0$, \eqref{exter3} follows from the assumption $g\in\mathcal{G}_{1}^{2}$. When $s\geq1$, we first use the induction method to prove the following inequality:
\begin{align}\label{exter4}
\left\|\nabla^{2}V_g(s,y)\right\|_{\rm HS}\leq C_{1}se^{C_{2}ms\eta}.
\end{align}
for certain positive constants $C_{1},C_{2}$ that are independent of $m$ and $\eta$. When $s=1$, \eqref{exter4} follows from \eqref{sec} immediately.

Now, we assume \eqref{exter4} holds for $s$. Then \eqref{form1} implies that
\begin{align*}
\nabla^{2}V_g(s+1,y)=\mathbb{E}\left[\nabla_{(\tl Y_{1}^{y})}^{2} g(\tl Y^{\tl Y_{1}^{y}}_{s+1})\nabla_{(y)}\tl Y_{1}^{y}\nabla_{(y)}\tl Y_{1}^{y}\right]+\mathbb{E}\left[\nabla_{(\tl Y_{1}^{y})} g(\tl Y^{\tl Y_{1}^{y}}_{s+1})\nabla^{2}_{(y)}\tl Y_{1}^{y}\right],
\end{align*}
where $\nabla^{2}_{(y)}$ means the operator acts on $y$. Thus we have
\begin{align*}
\left\|\nabla^{2}V_g(s+1,y)\right\|_{\rm HS}\leq \mathbb{E}\left|\nabla_{(\tl Y_{1}^{y})}^{2} g(\tl Y^{\tl Y_{1}^{y}}_{s+1})\nabla_{(y)}\tl Y_{1}^{y}\nabla_{(y)}\tl Y_{1}^{y}\right|+\mathbb{E}\left|\nabla_{(\tl Y_{1}^{y})} g(\tl Y^{\tl Y_{1}^{y}}_{s+1})\nabla^{2}_{(y)}\tl Y_{1}^{y}\right|.
\end{align*}
Then we can write by the Cauchy-Schwarz inequality and \eqref{grad3} that
\begin{align*}
\mathbb{E}\left|\nabla_{(\tl Y_{1}^{y})}^{2} g(\tl Y^{\tl Y_{1}^{y}}_{s+1})\nabla_{(y)}\tl Y_{1}^{y}\nabla_{(y)}\tl Y_{1}^{y}\right|
\leq&C_{1}se^{C_{2}ms \eta}\sup_{|u|=1}\mathbb{E}|\nabla_{u}\tl Y_{1}^{y}|^{2}\leq C_{1}se^{C_{2}(s+1)m\eta}
\end{align*}
whereas by \eqref{exter2} and \eqref{gradsec}
\begin{align*}
\mathbb{E}\left|\nabla_{(\tl Y_{1}^{y})} g(\tl Y^{\tl Y_{1}^{y}}_{s+1})\nabla^{2}_{(y)}\tl Y_{1}^{y}\right|
\leq&e^{C_{2}ms\eta}\sup_{|u_{1}|,|u_{2}|=1}\mathbb{E}\left|\nabla_{u_{1}}\nabla_{u_{2}}\tl Y_{1}^{y}\right|\leq C_{1}e^{C_{2}(s+1)m\eta}.
\end{align*}
These inequalities imply that
\begin{align*}
\left\|\nabla^{2}V_g(s+1,y)\right\|_{\rm HS}\leq C_{1}(s+1)e^{C_{2}(s+1)m\eta},
\end{align*}
\eqref{exter4} is proved. Then notice that $m\eta\geq1$, \eqref{exter3} follows from the inequality $r\leq e^{r}$ for any $r\geq0$.
\end{proof}

\begin{proof}[Proof of Lemma \ref{exSGD}]

With Lemma \ref{coupling2}, one can derive that
\begin{align*}
\mathbb{E}|\tilde{ z }_{1}|^{2}\leq C_{1}e^{C_{2}m\eta}\mathbb{E}|\tilde{ z }_{0}|^{2}+C_{1}e^{C_{2}m\eta}
\end{align*}
for some $C_{1},C_{2}>0$, which are independent of $m$ and $\eta$. Inductively, one can write by the inequality $r\leq e^{r}$ for any $r>0$ and $m\eta>1$ that
\begin{align*}
\mathbb{E}|\tilde{ z }_{s}|^{2}\leq& C_{1}^{s}e^{C_{2}ms\eta}\mathbb{E}|\tilde{ z }_{0}|^{2}+C_{1}e^{C_{2}m\eta}\sum_{j=1}^{s}C_{1}^{j}e^{C_{2}ms\eta}\\
\leq&C_{1}^{s}e^{C_{2}ms\eta}\mathbb{E}|\tilde{ z }_{0}|^{2}+C_{1}^{s+1}e^{C_{2}m(s+1)\eta}\\
\leq&C_{3}e^{C_{4}ms\eta}(1+\mathbb{E}|\tilde{ z }_{0}|^{2})
\end{align*}
for certain constants $C_{3},C_{4}>0$, which are independent of $m$ and $\eta$. The proof is complete.
\end{proof}
\end{appendix}

\section*{Acknowledgements}
We are grateful to the Editor and two anonymous referees for
their insightful comments and suggestions on this article, which have led to significant improvements.
Ting Zhang's research was supported by the National Natural
Science Foundation of China (12301342, W2521122).


\bibliographystyle{amsplain}

\end{document}